\newcommand{\ep}{\varepsilon}  
\renewcommand{\d}{\mathrm{d}}  % for integral e.g. \d x
\newcommand{\R}{\mathbb{R}}  % real numbers 
\renewcommand{\AA}{A}  % shape of interest
\newcommand{\pp}{p}
\newcommand{\vmean}{\mathbf{M}}  % volume mean
\newcommand{\smean}{\mathbf{m}}  % surface mean
\renewcommand{\div}{\mathrm{div}} % divergence 
\begin{document}

\title*{PDE methods for extracting normal vector fields and distance functions of shapes}
% Use \titlerunning{Short Title} for an abbreviated version of
% your contribution title if the original one is too long
\author{Takahiro Hasebe, Jun Masamune,  Hiroshi Teramoto and Takayuki Yamada}
% Use \authorrunning{Short Title} for an abbreviated version of
% your contribution title if the original one is too long
\institute{Takahiro Hasebe \at Department of Mathematics, Faculty of Science, Hokkaido University, 
Kita 10, Nishi 8, Kita-Ku, Sapporo, Hokkaido, 060-0810, Japan. \email{thasebe@math.sci.hokudai.ac.jp} 
\and Jun Masamune \at Graduate School of Science, Tohoku University, Sendai 980-8579, Japan. 
\email{jun.masamune.c3@tohoku.ac.jp} 
\and Hiroshi Teramoto \at Department of Mathematics, Kansai University, 3-3-35 Yamate-cho, Suita, Osaka, 564-8680, Japan. 
\email{teramoto@kansai-u.ac.jp} 
\and Takayuki Yamada \at Department of Strategic Studies, Institute of Engineering Innovation, Graduate School of Engineering, The University of Tokyo,
Yayoi 2--11--16, Bunkyo-ku, 113--8656, Tokyo, Japan. \email{t.yamada@mech.t.u-tokyo.ac.jp} }
%
% Use the package "url.sty" to avoid
% problems with special characters
% used in your e-mail or web address
%
\maketitle

\abstract*{Partial differential equations can be used for extracting geometric features of shapes. This article summarizes recent methods to extract the normal vector field from an elliptic equation proposed by Yamada and from the heat equation, and also a method to extract the (signed) distance function from an elliptic equation that generalizes Varadhan's in 1967.}

\abstract{Partial differential equations can be used for extracting geometric features of shapes. This article summarizes recent methods to extract the normal vector field from an elliptic equation proposed by Yamada and from the heat equation, and also a method to extract the (signed) distance function from an elliptic equation that generalizes Varadhan's in 1967.}

%{\large  %%%%%  To be removed
%%%%%%%%%%%%%%%%  Section 1    %%%%%%%%%%%%%%%%%%%%%%%%%%%%%%%%%%%%%%%%%%%%%%%%%%%%%%%%%%%%%%%%%%%%%%%%%%%%%%%%%%%%%%%%%%%%%%%%%%%%%%%%%%%%%%%%%%%%%%%%%%%%%%%%%%%%%
\section{Introduction}
\label{sec:1}

%Partial differential equations (PDEs) are widely used for understanding and predicting phenomena e.g.\ in physics, chemistry, engineering and economics.  
Extracting geometric features of shapes from partial differential equations (PDEs) has been developed in the literature, especially in the context of image analysis and engineering, see e.g.\ \cite{BF,crane2013geodesics,han2004cortical,yamada2019geometric,yamada2019thickness} and references therein. In particular, a construction of signed distance functions using PDEs is broadly useful in topology optimization \cite{AJT04}. 

%We are planning to apply PDE methods to sensitivity analysis in topology optimization for a purpose different from the traditional level-set methods: to construct geometric features from PDEs and enabling mechanical characteristics to be used as objective or constraint functions.  
Motivated by the above situation and inspired by physical intuitions, we developed PDE methods to extract the normal vector fields of sets \cite{M1} and the (signed) distance functions of sets \cite{M}.  
The present paper summarizes these results with some progress, including mathematical proofs or a sketch of proofs. Notice that ``thickness'' of shapes is another important geometric feature. Thickness is discussed in \cite{NY,yamada2019geometric,yamada2019thickness} and is omitted in the present article.  

The results are briefly described here; further details are given in respective sections. In Section \ref{sec2}, we consider the following elliptic equation  proposed by Yamada \cite{yamada2019geometric,yamada2019thickness}  for $\R^N$-valued functions $s=s_a$ defined in a bounded open subset $\Omega \subset \R^N$
\begin{equation}\label{eq:Yamada}
\begin{cases}
-\div \left(a \nabla s - {\rm Id} \chi_\AA \right)+(1-\chi_\AA)s=0&\text{in} \quad \Omega \\
s =0 & \text{on} \quad \partial\Omega, 
\end{cases}
\end{equation}
where $a>0$ is a parameter and $\chi_\AA$ is the characteristic function over a given set $\AA\subseteq \Omega$ of smooth boundary. 
It is  shown that $s_a$ converges to the normal vector field on $\partial \AA$ as $a\to0^+$ in a suitable sense, see Fig.~\ref{figsa}. 

\begin{figure}[ht]
\centering
\subfigure[$a=10^{-1}$]{\includegraphics[width=0.45\textwidth]{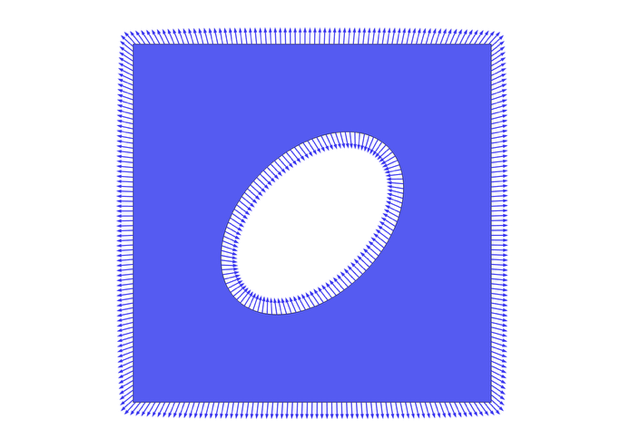}}
\subfigure[$a=10^{-3}$]{\includegraphics[width=0.45\textwidth]{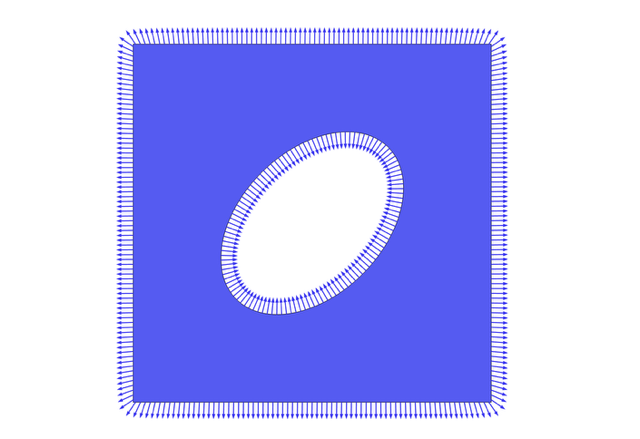}}
\subfigure[$a=10^{-4}$]{\includegraphics[width=0.45\textwidth]{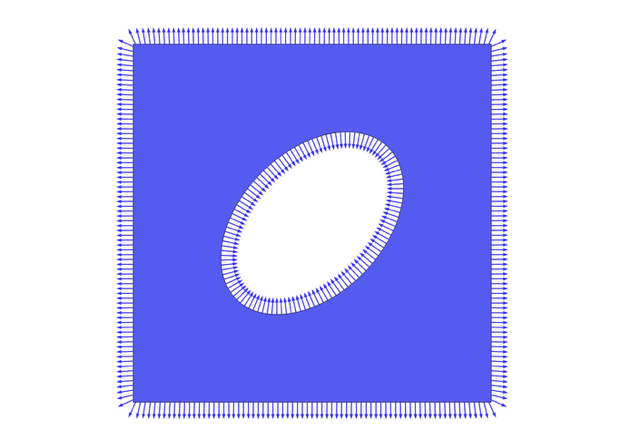}}
\subfigure[$a=10^{-6}$]{\includegraphics[width=0.45\textwidth]{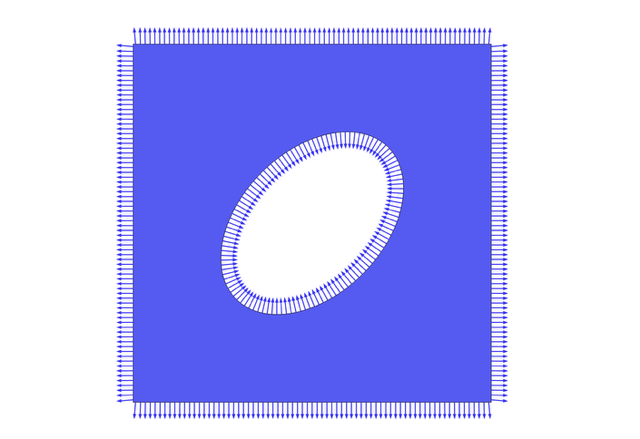}}
\caption{$\frac{s_a}{\|s_a\|}$ on the boundary of $\AA$  for different $a$'s. The total domain $\Omega$ is suitably chosen and is omitted in the figures.} \label{figsa}
\end{figure}

 We also discuss a slightly different equation
 \begin{equation}\label{eq:Yamada2}
 \begin{cases}
-\div \left(a \nabla \tilde{s} - {\rm Id} \chi_\AA \right) + \tilde{s}=0 & \text{in} \quad \Omega, \\
\tilde{s}=0 & \text{on} \quad \partial \Omega, 
\end{cases} 
\end{equation}
from which a similar conclusion follows. 
 
Section \ref{sec3} analyzes a solution $h(t,x)$ to the Cauchy problem of the heat equation
 \begin{equation}\label{eq:heat}
 \begin{cases}
 \partial_t h (t,x) = \Delta h (t,x), \\
 h(0,x) = \chi_\AA(x), \qquad\quad (t,x) \in (0,\infty)\times \R^N. 
 \end{cases}
 \end{equation}
In Subsection \ref{sec3.1}, it is shown that $-\sqrt{4\pi t}\nabla_x h(t,x)$ converges to the normal vector of $\partial \AA$ at $x\in \partial\AA$ as $t\to0^+$ if $\partial \AA$ is smooth. A rate of convergence is estimated.  Singular boundary points are then treated in Subsection \ref{sec3.2}.   Actually, the proof works for more general functions $h$ that are written as the convolution of $\chi_\AA$ and a rotationally invariant kernel, so that we give results in such a general setting.

In Section \ref{sec4}, we analyze a solution $u=u_a$ to the elliptic equation 
\begin{equation}\label{eq:sd0}
\begin{cases}
-a\Delta u+u = f & \text{in} \quad \Omega,\\
u=g&\text{on}\quad \partial \Omega,      
\end{cases}
\end{equation}
where $a>0$ is a parameter. It is shown that $-\sqrt{a} \log u_a(x)$ converges to the distance function $d(x,\partial \AA)$ on $\AA$ as $a\to0^+$, where $\AA:= \Omega \setminus \overline{\{x \in \Omega: f(x)>0\}}$. 
In the particular case where $f = C^* \chi_{\Omega \setminus \overline{\AA}}$ with any constant $C^* \ge \sup_{x\in \partial\Omega}g(x)$, we show that the function  
\begin{align}\label{eq:sdist}
    U_a(x):=
\begin{cases}
\sqrt{a}\log u_a(x),\quad &x \in \overline{\AA}, \\ 
-\sqrt{a}\log ( C^* -u_a(x)),\quad &x\in \Omega \setminus \overline{\AA} 
\end{cases}
\end{align}
converges to the signed distance function of $\partial \AA$
on a subdomain $\Omega^*$ such that $\overline{\AA} \subset \Omega^* \subset\Omega$. 
Rate of convergence is also provided.

%%%%%%%%%%%%%%%%  Section 2   %%%%%%%%%%%%%%%%%%%%%%%%%%%%%%%%%%%%%%%%%%%%%%%%%%%%%%%%%%%%%%%%%%%%%%%%%%%%%%%%%%%%%%%%%%%%%%%%%%%%%%%%%%%%%%%%%%%%%%%%%%%%%%%%%%%%%
\section{Normal vector field: elliptic equations proposed by Yamada} \label{sec2}

Let $\AA$ and $\Omega$ be open bounded subsets of $\R^N~(N\ge2)$ such that the closure  $\overline{\AA}$ is contained in $\Omega$.  The boundary of $\AA$ is assumed to be Lipschitz in the sense of \cite[Definition 9.57]{Leo}, i.e., for each $x \in \partial\AA$ there is a neighborhood $U \subset \R^N$ of $x$ such that $\partial\AA \cap U$ can be represented as the graph of a Lipschitz function in some orthogonal basis. Lipschitz continuity implies the differentiability almost everywhere and allows us to define the surface measure of $\partial \AA$ denoted by $\sigma$; see e.g.~\cite[Example 9.56]{Leo}. Then we can define the normal vector of the surface $\partial \AA$, oriented outwards from $\AA$ and defined $\sigma$-almost everywhere. The normal vector at $x \in \partial \AA$ is denoted by $n(x)$. 

The standard inner product in $\R^N$ is signified by $(\cdot,\cdot)$ and the Hilbert-Schmidt inner product  in the matrix space $M_N(\R)$ is denoted by $(A: B):=\text{Tr}(A^{T}B)$. The inner product in the $\R^N$-valued Sobolev space $H_0^1(\Omega)^N$ is given by  
\[
\langle \varphi, \psi \rangle_{H_0^1(\Omega)^N} :=  \int_\Omega (\nabla \varphi: \nabla \psi)\, \d x= \sum_{i,j=1}^N  \langle \partial_{x_i}\varphi_j,  \partial_{x_i}\psi_j\rangle_{L^2(\Omega)}
\]
for $\varphi=(\varphi_1,\varphi_2,\dots, \varphi_N), \psi=(\psi_1,\psi_2,\dots, \psi_N) \in H_0^1(\Omega)^N$.

Equation \eqref{eq:Yamada} is formulated in the weak form 
\begin{equation}\label{eq:yamada}
a \int_\Omega (\nabla s: \nabla \varphi)\, \d x  + \int_{\Omega\setminus \AA}(s,\varphi) \, \d x = \int_{\partial \AA} (n,\varphi) \,\d\sigma, \qquad \varphi \in H_0^1(\Omega)^N,  
\end{equation}
where $a$ is a positive parameter.  
On the right hand side, the function $\varphi$ on the surface $\partial \AA$, sometimes denoted by $\varphi|_{\partial\AA}$,  is defined via the trace theory, see e.g.\ \cite[Theorem 18.1]{Leo}.  
In particular, the inequality 
\begin{equation} \label{eq:Trace}
\|\varphi \|_{L^2(\partial \AA)^N} \le C_1 \|\varphi\|_{H_0^1(\Omega)^N}
\end{equation}
holds for some constant $C_1>0$ independent of $a$ and $\varphi$, so that the surface integral in \eqref{eq:yamada} is a bounded linear functional on $H_0^1(\Omega)^N$. By Lax-Milgram's theorem (see e.g.\ \cite[Corollary 5.8]{B11}), Equation \eqref{eq:yamada} has a unique solution $s=s_a\in H_0^1(\Omega)^N$. 

\begin{theorem} [\cite{M1}] \label{thm1}The following holds:  
	\begin{equation}\label{eq:normal}
	\lim_{a\to0^+} \int_{\Omega \setminus \AA}(s_a, \varphi)  \, \d x = \int_{\partial \AA} (n,\varphi) \, \d\sigma, \qquad  \varphi\in H_0^1(\Omega)^N. 
	\end{equation}
	Moreover, the strong convergence $a s_a \to 0$ in $H_0^1(\Omega)^N$   holds. 
\end{theorem}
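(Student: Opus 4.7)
The plan is to prove the two claims in a single sweep by first establishing a uniform $H_0^1$-bound on $as_a$, then pinning its weak limit to zero, upgrading to strong convergence, and finally reading off \eqref{eq:normal} from the weak formulation. To start, I would test \eqref{eq:yamada} against $\varphi = s_a$ to obtain the energy identity
\[
a\|\nabla s_a\|_{L^2}^2 + \|s_a\|_{L^2(\Omega\setminus A)^N}^2 = \int_{\partial A}(n,s_a)\,\d\sigma.
\]
Since $|n|=1$, Cauchy--Schwarz bounds the right-hand side by $\sqrt{\sigma(\partial A)}\,\|s_a\|_{L^2(\partial A)^N}$, and then by \eqref{eq:Trace} by $C_1\sqrt{\sigma(\partial A)}\,\|\nabla s_a\|_{L^2}$. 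The resulting quadratic inequality immediately yields $\|as_a\|_{H_0^1(\Omega)^N}\le C_1\sqrt{\sigma(\partial A)}$ together with $\|s_a\|_{L^2(\Omega\setminus A)^N}=O(a^{-1/2})$.

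Next I would identify any subsequential weak $H_0^1$-limit $v$ of $as_a$ as zero. The bound $\|as_a\|_{L^2(\Omega\setminus A)^N}=O(\sqrt a)$ forces $v\equiv 0$ on $\Omega\setminus A$. Test functions $\varphi\in H_0^1(A)^N$, extended by zero outside $A$, annihilate the boundary and the exterior terms of \eqref{eq:yamada}, so $as_a$ is harmonic on $A$, and hence so is $v$. Since $v\in H^1(\Omega)^N$, its two traces on $\partial A$ from the inside and the outside coincide, and the interior trace therefore vanishes; uniqueness for the Dirichlet problem then forces $v\equiv 0$ on $A$. A standard subsequence argument extends this to the whole family $as_a\rightharpoonup 0$ in $H_0^1(\Omega)^N$.

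To upgrade to strong convergence, I would multiply the energy identity by $a$ to get
\[
\|\nabla(as_a)\|_{L^2}^2 + a\|s_a\|_{L^2(\Omega\setminus A)^N}^2 = \int_{\partial A}(n,as_a)\,\d\sigma.
\]
The weak convergence $as_a\rightharpoonup 0$ together with the continuous trace $H_0^1(\Omega)^N\to L^2(\partial A)^N$ sends the right-hand side to zero against the fixed $n$; hence $as_a\to 0$ strongly in $H_0^1(\Omega)^N$. Rewriting \eqref{eq:yamada} as
\[
\int_{\Omega\setminus A}(s_a,\varphi)\,\d x = \int_{\partial A}(n,\varphi)\,\d\sigma - \int_\Omega (\nabla(as_a):\nabla\varphi)\,\d x
\]
and letting $a\to 0^+$, the last term vanishes by the strong convergence, which proves \eqref{eq:normal}. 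The hard step is the middle one: pinning down $v\equiv 0$ requires matching the exterior $L^2$-concentration of $s_a$ with the interior harmonicity through the $H^1$-trace on the Lipschitz surface $\partial A$; everything else reduces to energy bookkeeping.
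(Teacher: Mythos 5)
Your proof is correct and follows the same overall skeleton as the paper's (energy identity, uniform bound, subsequence extraction, identification of the weak limit, upgrade to strong convergence). The one genuinely different step is how you kill the weak limit $v$ inside $A$: you test \eqref{eq:yamada} against $\varphi\in H_0^1(A)^N$ to deduce harmonicity of $v$ on $A$, combine the exterior vanishing with the two-sided-trace matching to get zero interior trace, invoke the characterization $\{w\in H^1(A):\mathrm{tr}\,w=0\}=H_0^1(A)$ for the Lipschitz domain $A$, and then appeal to Dirichlet uniqueness. The paper instead simply substitutes $\varphi=w$ into \eqref{eq:yamada} (both the $\Omega\setminus A$ term and the boundary term vanish because $w$ does there), getting $\int_\Omega a\,\nabla s_a:\nabla w\,\d x=0$ identically in $a$, and passes to the weak limit to read off $\|w\|_{H_0^1}^2=0$. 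The paper's route is a bit shorter and avoids the trace-kernel characterization, which is a nontrivial Sobolev fact, whereas your route makes the structural role of harmonicity explicit; both are valid. Two other minor comments: your derivation of \eqref{eq:normal} from strong $H_0^1$-convergence is harmless overkill, since fixed $\varphi$ makes the Dirichlet term a bounded linear functional and weak convergence already suffices (the paper notes \eqref{eq:normal} is in fact equivalent to $as_a\rightharpoonup 0$); and the strong-convergence argument (pair $n$ against $T(as_a)\rightharpoonup 0$ in $L^2(\partial A)^N$, then read the energy identity at $\varphi=as_a$) is precisely the paper's.
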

\begin{remark}\label{rem1}
	Assertion \eqref{eq:normal} means that the vector-valued function $s_a |_{\Omega\setminus \AA}$ converges to the vector-valued measure $n(x)\delta_{\partial \AA}(\d x)$ in the dual space of $\{\varphi|_{\Omega \setminus \AA}: \varphi \in H_0^1(\Omega)^N \}$, where $\delta_{\partial \AA}$ stands for the finite Borel measure on $\Omega \setminus \AA$
	\begin{equation}\label{eq:delta}
	\delta_{\partial\AA}(B) := \text{Area}(B\cap \partial \AA) =  \int_{B\cap \partial \AA} \d \sigma, 
	\end{equation}
	which is singular to the Lebesgue measure on $\Omega \setminus \AA$.  
\end{remark}

\begin{proof}[Proof of Theorem \ref{thm1}] By taking the test function $\varphi = s_a$, the weak form yields 
\begin{equation}\label{eq:s1}
a \|s_a \|_{H_0^1(\Omega)^N}^2 \le a \|s_a \|_{H_0^1(\Omega)^N}^2 +  \|s_a \|_{L^2(\Omega\setminus\AA)^N}^2 = \int_{\partial \AA} (n, s_a)\, \d \sigma. 
\end{equation}
By the Schwarz inequality and \eqref{eq:Trace}, the last surface integral is further bounded by 
\begin{align}
\left|\int_{\partial \AA} (n, s_a)\, \d \sigma  \right|   \label{eq:s2}
&\le \|n\|_{L^2(\partial \AA)^N} \|s_a\|_{L^2 (\partial \AA)^N} = \sqrt{\text{Area}(\partial \AA)}\,\|s_a\|_{L^2 (\partial \AA)^N} \\
&\le C_2 \|s_a\|_{H_0^1(\Omega)^N}, \notag 
\end{align}
where $C_2 :=\sqrt{\text{Area}(\partial \AA)} \, C_1$ is independent of $a>0$.  Combining \eqref{eq:s1} and \eqref{eq:s2} together yields  
\begin{equation} \label{eq:uniform_bound}
 \|a s_a \|_{H_0^1(\Omega)^N} \le C_2.   
\end{equation}
Moreover,  \eqref{eq:uniform_bound} and \eqref{eq:s2} imply that the surface integral is bounded by $C_2^2 /a$, so that \eqref{eq:s1} (multiplied by $a$) gives
\begin{equation} \label{eq:s3}
 \| a s_a \|_{H_0^1(\Omega)^N}^2 +  \|\sqrt{a} s_a \|_{L^2(\Omega\setminus\AA)^N}^2 \le C_2^2. 
\end{equation}
 Assertion \eqref{eq:normal} is equivalent to that $as_a$ weakly converges to $0$ in $H_0^1(\Omega)^N$ as $a\to0^+$.  By the uniform boundedness \eqref{eq:uniform_bound}, we can extract a subsequence of $\{a s_a\}_{0<a<1}$ as $a\to0^+$,   simply denoted by $\{a s_a\}$, that converges weakly to some $w$ in $H_0^1(\Omega)^N$, see e.g.\ \cite[Theorem 3.18]{B11}. This implies that the same subsequence $\{a s_a\}$ also weakly converges in $L^2(\Omega)^N$ to $w$ because the continuous embedding $L^2(\Omega)^N \to H_0^1(\Omega)^N$ induces the dual continuous embedding $(H_0^1(\Omega)^N)^* \to (L^2(\Omega)^N)^*$.  From \eqref{eq:s3}, the subsequence $\{\sqrt{a} s_a\}$ is bounded in $L^2(\Omega \setminus \AA)^N$ and so $\{a s_a\}$ converges to 0 in $L^2(\Omega \setminus \AA)^N$. This shows $w|_{\Omega\setminus \AA} =0$ a.e. According to the definition of $w|_{\partial \AA}$ via the trace theory, $w|_{\partial \AA}=0$ a.e. too. By taking $\varphi=w$ in the weak form \eqref{eq:yamada}, we get 
\[
\int_\Omega (a \nabla s_a: \nabla w)\, \d x = 0. 
\]
We further take the limit $a\to0^+$ along the subsequence we have selected above to obtain $\|w\|_{H_0^1(\Omega)^N}=0$, i.e., $w=0$ a.e. By the standard  compactness argument, without going to subsequences, the weak convergence $as_a \rightharpoonup 0$ in $H_0^1(\Omega)^N$ fully follows. 

For the notational clarity we set $T\colon H_0^1(\Omega)^N \to L^2(\partial \AA)^N$ to be the bounded linear operator $\varphi \mapsto \varphi|_{\partial\AA}$. The already established fact $a s_a \rightharpoonup 0$ in $H_0^1(\Omega)^N$ implies the weak convergence $T(as_a) \rightharpoonup 0$ in $L^2(\partial \AA)^N$ because $f \circ T \in (H_0^1(\Omega)^N)^*$ for all $f \in (L^2(\partial \AA)^N)^*$.  Hence,  
\begin{equation}\label{eq:tr}
a \int_{\partial \AA}(n,s_a) \,\d \sigma = \langle n, T(as_a)\rangle_{L^2(\partial \AA)^N} \to 0 \qquad \text{as} \qquad a\to0^+. 
\end{equation}
Selecting the test function $\varphi = as_a$ in the weak form gives 
\[
\|a s_a\|_{H_0^1(\Omega)^N}^2 + \|\sqrt{a}s_a\|_{L^2(\Omega\setminus \AA)^N}^2 = a \int_{\partial \AA}(n,s_a) \,\d \sigma, 
\]
which, together with \eqref{eq:tr}, establishes the strong convergence as desired. 
\end{proof}

Next we discuss Equation \eqref{eq:Yamada2}. Its weak form is 
\begin{equation*}\label{eq:yamada2}
a \int_\Omega (\nabla \tilde{s}: \nabla \varphi)\, \d x  + \int_{\Omega}(\tilde{s},\varphi) \, \d x = \int_{\partial \AA} (n,\varphi) \,\d\sigma, \qquad \varphi \in H_0^1(\Omega)^N.   
\end{equation*}
As before, a unique solution $\tilde{s}=\tilde{s}_a \in H_0^1(\Omega)^N$ exists. 

\begin{theorem}\label{thm2} It holds that 
\begin{equation}\label{eq:normal2}
	\lim_{a\to0^+} \int_{\Omega}(\tilde{s}_a, \varphi)  \, \d x = \int_{\partial \AA} (n,\varphi) \, \d\sigma, \qquad  \varphi\in H_0^1(\Omega)^N 
	\end{equation}
and that $a \tilde{s}_a \to 0$ strongly in $H_0^1(\Omega)^N$. 
\end{theorem}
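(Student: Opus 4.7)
The plan is to adapt the proof of Theorem~\ref{thm1} essentially line by line; the only substantive change is that the coercivity term produced by the new equation now controls $\tilde{s}_a$ on all of $\Omega$ rather than on $\Omega\setminus\AA$, which if anything simplifies the compactness step.

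First I would test the weak formulation against $\varphi=\tilde{s}_a$ to obtain
\[
a\|\tilde{s}_a\|_{H_0^1(\Omega)^N}^2 + \|\tilde{s}_a\|_{L^2(\Omega)^N}^2 = \int_{\partial\AA}(n,\tilde{s}_a)\,\d\sigma,
\]
and bound the right-hand side by $C_2\|\tilde{s}_a\|_{H_0^1(\Omega)^N}$ using the Schwarz inequality and the trace estimate \eqref{eq:Trace}, exactly as in \eqref{eq:s2}. This yields the uniform bound $\|a\tilde{s}_a\|_{H_0^1(\Omega)^N}\le C_2$ and, after multiplying the energy identity by $a$, the twin estimate
\[
\|a\tilde{s}_a\|_{H_0^1(\Omega)^N}^2 + \|\sqrt{a}\,\tilde{s}_a\|_{L^2(\Omega)^N}^2 \le C_2^2.
\]
Since the second term now controls $\tilde{s}_a$ on all of $\Omega$, boundedness of $\sqrt{a}\,\tilde{s}_a$ in $L^2(\Omega)^N$ forces $a\tilde{s}_a\to 0$ in $L^2(\Omega)^N$. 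Extracting a subsequence with $a\tilde{s}_a\rightharpoonup w$ in $H_0^1(\Omega)^N$ and matching with the $L^2$ limit forces $w=0$ a.e., and uniqueness of the weak limit then removes the passage to subsequences.

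Once $a\tilde{s}_a\rightharpoonup 0$ in $H_0^1(\Omega)^N$ has been established, inserting any fixed $\varphi\in H_0^1(\Omega)^N$ into the weak form yields $a\int_\Omega(\nabla\tilde{s}_a:\nabla\varphi)\,\d x\to 0$, which is precisely \eqref{eq:normal2}. For the strong convergence I would test against $\varphi=a\tilde{s}_a$, as in the proof of Theorem~\ref{thm1}, to obtain
\[
\|a\tilde{s}_a\|_{H_0^1(\Omega)^N}^2 + \|\sqrt{a}\,\tilde{s}_a\|_{L^2(\Omega)^N}^2 = \int_{\partial\AA}\bigl(n,T(a\tilde{s}_a)\bigr)\,\d\sigma,
\]
where $T$ is the trace operator; the right-hand side tends to $0$ because $T$ is weak-to-weak continuous and $a\tilde{s}_a\rightharpoonup 0$, which gives $\|a\tilde{s}_a\|_{H_0^1(\Omega)^N}\to 0$.

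No serious obstacle is anticipated: the new zeroth-order coefficient $1$ is pointwise larger than the coefficient $1-\chi_\AA$ appearing in \eqref{eq:Yamada}, so every inequality used for $s_a$ carries over unchanged and the identification of $w$ on the interior of $\AA$ now comes for free rather than requiring a separate trace argument.
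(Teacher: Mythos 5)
Your proposal is correct and follows essentially the same route as the paper: the paper's own proof is a brief sketch that replaces \eqref{eq:s3} with the improved estimate $\|a\tilde{s}_a\|_{H_0^1(\Omega)^N}^2+\|\sqrt{a}\tilde{s}_a\|_{L^2(\Omega)^N}^2\le C_2^2$ and notes this makes the weak-limit identification simpler, which is precisely the simplification you identify. Your line-by-line fill-in, including the strong-convergence step via the trace operator, matches the intended argument.
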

\begin{remark} 
Similarly to Remark \ref{rem1}, assertion \eqref{eq:normal2} implies that $\tilde{s}_a$ converges to the $\R^N$-valued finite Borel measure $n(x)\delta_{\partial \AA}(\d x)$ on $\Omega$, where $\delta_{\partial\AA}$ is defined by the same \eqref{eq:delta} as before but now is regarded as a measure on the larger domain $\Omega$. 
The convergence primarily holds in the dual space $(H_0^1(\Omega))^*$, so that it particularly holds in the Schwartz distribution space $\mathcal{D}'(\Omega)$.  \end{remark}
\begin{proof}[Proof of Theorem \ref{thm2}]
The proof is very similar to Theorem \ref{thm1}. The main difference is that inequality \eqref{eq:s3} is improved into 
\begin{equation*} \label{eq:ss3}
 \| a \tilde{s}_a \|_{H_0^1(\Omega)^N}^2 +  \|\sqrt{a} \tilde{s}_a \|_{L^2(\Omega)^N}^2 \le C_2^2. 
\end{equation*}
Therefore, the family $\{\sqrt{a}\tilde{s}_a\}_{0<a<1}$ is uniformly bounded in $L^2(\Omega)^N$, which actually makes the arguments for proving $a\tilde{s}_a \rightharpoonup 0$ in $H_0^1(\Omega)^N$ slightly simpler than the case of $as_a$.  
\end{proof}

\begin{note}
Theorem \ref{thm2} and the strong convergence $as_a \to 0 $ in Theorem \ref{thm1} are new results that were not included in \cite{M1}. 
\end{note}

%%%%%%%%%%%%%%%%  Section 3  %%%%%%%%%%%%%%%%%%%%%%%%%%%%%%%%%%%%%%%%%%%%%%%%%%%%%%%%%%%%%%%%%%%%%%%%%%%%%%%%%%%%%%%%%%%%%%%%%%%%%%%%%%%%%%%%%%%%%%%%%%%%%%%%%%%%%
\section{Normal vector field: the heat equation method}\label{sec3}
Let $N\ge2$ and $\AA$ an open subset of $\R^N$. In this section, $\kappa\colon [0,\infty) \to \R$ is  a fixed $C^1$ function such that 
\begin{description}[($\kappa1$)]
\item[($\kappa1$)]{$\displaystyle\lim_{r\to \infty} \kappa(r)=0$ and $\displaystyle \int_{\R^N} |\kappa(\|x\|^2)|\,\d x <\infty$,}  
\item[($\kappa2$)]{$\displaystyle\lim_{r\to \infty} r \kappa'(r^2)=0$ and  $\displaystyle\int_{\R^N}  \sup_{b \in K}\left( \|x + b\| \!\cdot\!|\kappa'(\|x+b \|^2)|\right)\d x<\infty$ for any compact subset $K \subset \R^N$.} 
\end{description}
Let $\kappa_t(x) := t^{-N/2} \kappa(\|x\|^2/t)$ and 
\[
h(t,x) := \int_{\R^N} \kappa_t(x-y) \chi_\AA(y)\, \d y, \qquad t>0,~ x\in \R^N.  
\]
Assumption $(\kappa2)$ allows us to calculate the gradient
\begin{equation}\label{eq:nablah}
\nabla_x h(t, x) = \int_{\R^N} \frac{2(x-y)}{t^{N/2+1}} \kappa'(\|x-y\|^2/t)\chi_\AA(y) \,\d y. 
\end{equation}
The goal of this section is to demonstrate that $\nabla_x h(t, x)$ is approximately oriented toward the normal direction on  $\partial \AA$ whenever $x$ is a smooth point of the boundary. If $x$ is a singular point of the boundary then the normal vector is not defined. In this case, we still investigate the asymptotic behavior of the gradient $\nabla_x h(t, x)$.  

In the special case where $\kappa(r) = (4\pi)^{-N/2} \exp(-r/4)$, of course $h$ is a solution to the heat equation \eqref{eq:heat}. Since most of the results hold for general kernels $\kappa$ and the proof is almost the same, we deal with general $\kappa$.

%%%%%%%%%%%%%%%%%%%%%%%%%%%%%%%%%%%%%%%%%%%%%%%%%%%%%
\subsection{Smooth boundary points}\label{sec3.1}

Let $\alpha \in (0,1]$. 
The boundary $\partial \AA$ is said to be a \emph{$C^{1,\alpha}$-surface at $p\in \partial\AA$} if   there are an open neighborhood $U\subseteq\R^N$ of $\pp$ and a $C^{1,\alpha}$-diffeomorphism $\varphi$ of $U$ onto an open neighborhood  $\varphi(U)\subseteq \R^N$ of $0$  such that $\varphi(\pp)=0$ and $\varphi(A\cap U) = \{x \in \varphi(U): x_N>0\}$. 
An equivalent definition is that $\partial \AA$ can be written as the graph of a $C^{1,\alpha}$-function of $N-1$ variables in a neighborhood of $\pp$, i.e., there are an orthonormal basis $\{v_1,v_2,\dots, v_N\}$ of $\R^N$, an open hyperrectangle $V=\prod_{i=1}^N(-\delta_i,\delta_i)$ and  a $C^{1,\alpha}$-function $f\colon V':=\prod_{i=1}^{N-1} (-\delta_i,\delta_i) \to (-\delta_N,\delta_N)$ such that $f(0)=0$ and 
\begin{equation*} \label{eq:loc}
\AA \cap \left\{p+ \sum_{i=1}^N x_i v_i: x \in V \right\} = \left\{p +  \sum_{i=1}^N x_i v_i: x'\in V',  f(x') < x_N < \delta_N \right\}
\end{equation*}  
with shorthand notation $x'=(x_1,x_2,\dots, x_{N-1})$. 
In case $\partial \AA$ is a $C^{1,\alpha}$-surface at $\pp$, let $n$ signify the normal vector field of $\partial\AA$ in a neighborhood of $\pp$ oriented outwards from $\AA$ as before. Naturally, $n$ is $\alpha$-H\"older continuous in a neighborhood of $\pp$.

\begin{theorem}\label{Thm:normal} Let  $\alpha\in (0,1]$. Assume that $\partial \AA$ is a $C^{1,\alpha}$-surface at $\pp\in \partial \AA$. Then 
\begin{equation}\label{eq:asymptotic_normal}
\nabla_x h(t,x) = -  t^{-\frac1{2}}n(x) (\xi + o(1)) \quad  \text{as}\quad t\to 0^+ 
\end{equation}
uniformly on $\partial \AA \cap U$ for some neighborhood $U\subseteq\R^N$ of $\pp$, where $\xi$ is the scalar 
$$
\xi= S_{N-2} \int_{0}^\infty \kappa(r^2)r^{N-2}\,\d r  
$$
and $S_{N-2}$ is the area of the unit sphere in $\R^{N-1}$. We interpret $S_0=2$.   
\end{theorem}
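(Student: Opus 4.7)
The plan is to substitute $z = (y-x)/\sqrt{t}$ in the integral representation \eqref{eq:nablah}, which yields
$$
\sqrt{t}\, \nabla_x h(t,x) = -2 \int_{\R^N} z\, \kappa'(\|z\|^2)\, \chi_\AA(x+\sqrt{t}\, z)\, \d z.
$$
The theorem then reduces to showing that this expression converges to $-n(x)\,\xi$ uniformly for $x$ in a sufficiently small neighborhood of $\pp$ within $\partial \AA$.

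For the limit, I would replace $\chi_\AA(x+\sqrt{t}\,z)$ by the characteristic function of the inward tangent half-space $H(x):=\{z\in\R^N: (z,n(x))<0\}$. Decomposing $z = z' + \zeta\, n(x)$ with $z'\perp n(x)$, the $z'$-component of the integrand is odd in $z'$ and contributes nothing. For the $n(x)$-component, the identity $\frac{\d}{\d\zeta}\kappa(\|z'\|^2+\zeta^2) = 2\zeta\,\kappa'(\|z'\|^2+\zeta^2)$ combined with $(\kappa1)$ gives
$$
\int_{-\infty}^{0} \zeta\,\kappa'(\|z'\|^2+\zeta^2)\, \d\zeta \;=\; \tfrac{1}{2}\kappa(\|z'\|^2),
$$
and integrating over $\R^{N-1}$ in polar coordinates then produces exactly $\xi = S_{N-2}\int_0^\infty \kappa(r^2) r^{N-2}\,\d r$. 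Hence the target limit is $-n(x)\,\xi$.

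The remaining task is to estimate
$$
E(t,x) := -2\int_{\R^N} z\,\kappa'(\|z\|^2)\bigl[\chi_\AA(x+\sqrt{t}\,z)-\chi_{H(x)}(z)\bigr]\, \d z
$$
uniformly in $x\in \partial \AA \cap U$, which I would do by splitting at $\|z\|=R$. Assumption $(\kappa2)$, applied with $K=\{0\}$, gives $\int_{\|z\|>R} \|z\|\,|\kappa'(\|z\|^2)|\,\d z \to 0$ as $R\to\infty$, handling the tail uniformly in $x$ and $t$. For the inner region $\|z\|\le R$, the $C^{1,\alpha}$ representation of $\partial \AA$ implies, after possibly shrinking $U$, that each base point $x\in \partial \AA \cap U$ admits a uniform local graph description of $\AA$ whose tangent hyperplane is orthogonal to $n(x)$. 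The symmetric difference $\AA\bigtriangleup(x+H(x))$ inside the ball $B(x,R\sqrt{t})$ is therefore confined to a wedge of the form $\{y:|(y-x,n(x))|\le C\|y-x\|^{1+\alpha}\}$, whose preimage under $z\mapsto x+\sqrt{t}\,z$ has Lebesgue measure at most $C' R^{N} t^{\alpha/2}$. Since $\|z\,\kappa'(\|z\|^2)\|$ is bounded on $\|z\|\le R$, the inner integral is of order $R^{N+1} t^{\alpha/2}$. Sending $R\to\infty$ first and then $t\to 0^+$ closes the estimate.

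The main obstacle is uniformity. One must choose $U$ small enough that a single $C^{1,\alpha}$ chart yields uniform graph constants, and that $n$, being $\alpha$-H\"older continuous, varies slowly enough for the wedge estimate to hold with $x$-independent $C$ and $C'$; both properties are encoded in the definition of a $C^{1,\alpha}$-surface at $\pp$. A secondary technical point is that $z\,\kappa'(\|z\|^2)$ need neither be compactly supported nor a priori integrable, which is precisely why the decay hypothesis $(\kappa2)$ is built into the assumptions.
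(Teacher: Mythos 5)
Your proof follows essentially the same route as the paper: rescale $y = x + \sqrt{t}\,z$, split the integral at $\|z\| = R$ using $(\kappa 2)$ to control the tail, replace $\AA$ by the tangent half-space with a symmetric-difference estimate that exploits the $C^{1,\alpha}$ graph bound $|f(z')-(\nabla f(0),z')|\le C\|z'\|^{1+\alpha}$, and evaluate the half-space integral via oddness plus the identity $w_N\kappa'(\|w\|^2)=\tfrac12\partial_{w_N}\kappa(\|w\|^2)$. The only cosmetic difference is that the paper makes the coupling $R=R(t)$ explicit (e.g.\ $R=\log(1/t)$, giving the error $O(t^{\alpha/2}R^{N+\alpha})$ rather than your $O(t^{\alpha/2}R^{N})$), whereas you phrase it as an $R\to\infty$-then-$t\to0$ argument; both yield the same uniform $o(1)$.
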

\begin{proof} We fix $x\in \partial \AA$ in a neighborhood of $\pp$ and perform the change of variables $y-x=\sqrt{t}z$ in \eqref{eq:nablah} to get 
\begin{equation}\label{eq:heat_nabla}
\nabla_x h(t,x) =  -\frac{2}{\sqrt{t}}\int_{\{z\in \R^N:\, x+\sqrt{t}z \in \AA\}} z \kappa'(\|z\|^2) \,\d z.
\end{equation}
%Thanks to the rotational invariance of the problem, we may and do assume that $n(x)=(-1,0,\dots, 0)$. 
%In case $\AA =H := \{z=(z_1,z_2,\dots,z_N) \in \R^N: z_1>0\}$, considering the symmetry of \eqref{eq:heat_nabla}, $\partial_{x_i} h(t,x)$ $(i=2,3,\dots,N)$ are all $0$, so that $\nabla_xh(t,x)$ is proportional to the normal vector $n(x)$. 
%In the general case, we approximate the integral region $\{z \in \R^N: x+\sqrt{t}z \in \AA\}$ appearing in \eqref{eq:heat_nabla}  with the half-plane $H$. 
We approximate the integral region with a half-space that is tangential to $\partial\AA$.  We begin with truncating the integral as
\begin{equation}\label{eq:cut}
\nabla_x h(t,x) = -\frac{2}{\sqrt{t}}\left(\int_{\{z\in\R^N:\,\|z\|\leq R,~x+\sqrt{t}z \in \AA\}} z \kappa'(\|z\|^2) \,\d z +\ep_1(t) \right),   
\end{equation}
where $R>0$ is supposed to be large and the error term $\ep_1(t)$ is bounded as
\begin{equation}\label{eq:e1}
\|\ep_1(t)\| \le \int_{\{z\in\R^n:\,\|z\|> R\}} \|z\|\!\cdot\! |\kappa'(\|z\|^2)| \,\d z. 
\end{equation}
%Assuming $\sqrt{t}R$ is small, we estimate the volumes of the symmetric difference of $B=\{z \in \R^N: x+\sqrt{t}z \in \AA, \|z\|\leq R\}$ and $B'= \{z\in\R^N: z\in H, \|z\|\leq R\}$, see Fig.~\ref{fig1}. For this purpose there is no loss of generality in assuming $x=0$. Since the tangent plane of $\AA$ at $0$ is $\partial H$ and $\partial\AA$ is $C^{1,\alpha}$, the boundary $\partial\AA$ can be represented as a graph $z_1 = f(z')$ of a $C^{1,\alpha}$-function $f$ in the small ball of radius $\sqrt{t}R$ and center $x = 0$, where $z'=(z_2,\dots, z_N)$.  Since $f(0) = \nabla f(0)=0$, we have a bound $|f(z')| \le C\|z'\|^{1+\alpha}$ in a neighborhood of $0$ for some $C>0$. This yields an estimate of the volume of the symmetric difference $B\Delta B' := (B\setminus B') \cup (B'\setminus B)$ as follows:    
Without loss of generality, we assume that $\AA$  in a neighborhood of $x$ can be represented as a graph $\{x+z: z_N > f(z'), z' \in V'\}$  in the standard basis, where $f$ is a  $C^{1,\alpha}$-function with $f(0)=0$ and $z= (z', z_N)$. Let $H:=\{x+ z\in \R^N: z_N > (\nabla f(0), z')\}$. Note that $\partial H$ is the tangent plane of $\partial \AA$ and it has normal vector $n(x)$ at $x$.  
Assuming $\sqrt{t}R$ is small, we estimate the volume of the symmetric difference of $B=\{z \in \R^N: x+\sqrt{t}z \in \AA, \|z\|\leq R\}$ and $B'= \{z\in\R^N: z\in H-x, \|z\|\leq R\}$. Let $g(z'):= f(z') - (\nabla f(0),z').$ Since $g(0) = \nabla g(0)=0$, we have a bound $|g(z')| \le C\|z'\|^{1+\alpha}$ in a neighborhood of $0$ for some $C>0$. The symmetric difference $B\Delta B' := (B\setminus B') \cup (B'\setminus B)$, when multiplied by the factor $\sqrt{t}$ and shifted by $x$, is the region shown in Fig.~\ref{fig1} surrounded by the graph of $x+f$, the hyperplane $\partial H$ and the sphere of center $x$ and radius $\sqrt{t}R$, so that its volume is estimated as     
\begin{align}
 |B\Delta B'| &= \frac{ |(\sqrt{t}B)\Delta (\sqrt{t}B')|}{t^{N/2}} \le \frac1{t^{N/2}}\int_{\|z'\|\le \sqrt{t}R} |g(z')| \,\d z'  \le \frac{C S_{N-2}}{N+\alpha}  t^\frac{\alpha}{2}R^{N+\alpha}.    \label{eq:vol}
\end{align}
\begin{figure}[t]
\sidecaption[t]
%\begin{center}
\begin{tikzpicture}[scale =1.1]
\draw[domain=-2.3:2.3] plot(\x, 0.2*\x*\x);
\draw (-2.4,0) -- (2.4,0);
\draw node at (1.3,1) {$\AA$};  
\draw node at (1.8,0.3) {$H$};  
\draw node at (0.05,0.02) [below left] {$x$};  
\path[fill, opacity=0.8] plot[domain=-0.981281:0.981281] (\x, 0.2*\x*\x)  arc [start angle = 11.1, end angle = 168.9, radius = 1] plot[domain=-0.89:0] (\x, 0.3*\x*\x-0.3*\x);
\path[fill, opacity=0.3]  plot[domain=0:1] (\x, 0) arc [start angle = 0, end angle = 11.1, radius = 1] plot[domain=0.981281:0] (\x, 0.2*\x*\x);
\path[fill, opacity=0.3]  plot[domain=0:-1] (\x, 0)  arc [start angle = 180, end angle = 168.9, radius = 1]  plot[domain=-0.981281:0] (\x, 0.2*\x*\x);
\draw[->] (0,0) -- (0,-0.7); 
\draw node at (0,-0.7) [below] {$n(x)$}; 
\draw[<->] (0.02,-0.1) -- (1,-0.1); 
\draw node at (0.5,-0.04) [below] {\small$\sqrt{t}R$};  
\end{tikzpicture}
%\end{center}
\caption{Approximating $\AA$ with $H$ at $x$. The black region stands for $x+\sqrt{t}B$ and the gray region for $x+(\sqrt{t}B)\Delta (\sqrt{t}B')$.}\label{fig1}
\end{figure}
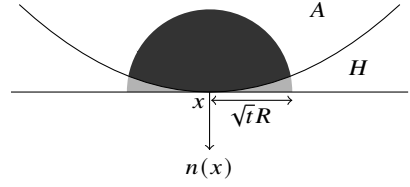
Taking $R=R(t)$ as a function of $t$ such that $R(t)\to\infty$ and $t^\frac{\alpha}{2}R^{N+\alpha}\to0$ as $t\to0^+$ (e.g., $R(t) =\log\frac1{t}$) yields  the following estimate for the integral in \eqref{eq:cut}: 
\begin{eqnarray}
 \int_{B} z \kappa'(\|z\|^2) \,\d z  \notag
&=& \left( \int_{B'}  + \int_{B\setminus B'} - \int_{B'\setminus B}\right) z \kappa'(\|z\|^2) \,\d z   \notag \\
&=& \int_{B'} z \kappa'(\|z\|^2) \,\d z  + \ep_2(t) \label{approx1}\\
&=&\int_{\|w\|\leq R,~w_N > 0} \left( \sum_{i=1}^{N-1} w_i \ell_i -w_N n(x) \right) \kappa'(\|w\|^2) \,\d w+\ep_2(t) \notag \\
&=& -n(x)\int_{\{w\in \R^N:\, w_N>0\}} w_N \kappa'(\|w\|^2) \,\d w +\ep_2(t) +\ep_3(t).  \label{approx2}
\end{eqnarray}
In the course of calculations above,  the change of variables $z =  w_1 \ell_1 + \cdots +w_{N-1} \ell_{N-1} -w_N n(x)$ is employed, where $\{\ell_1,\dots, \ell_{N-1}, n(x)\}$ is an orthonormal basis. Due to the oddness of integrands, the terms except $-w_N n(x)$ all vanished. The error terms $\ep_2(t)$ and $\ep_3(t)$ can be estimated in the ways 
\begin{align}
\|\ep_2(t)\| &\le \left(\sup_{z \in \R^N  }\|z\| \!\cdot\!  |\kappa'(\|z\|^2)|\right) |B\Delta B'|  =O( t^\frac{\alpha}{2}R^{N+\alpha}), \label{eq:E2} \\ 
\|\ep_3(t)\| &\le \int_{\{z\in\R^N:\,\|z\|>R\}} \|z\|\!\cdot\!  |\kappa'(\|z\|^2)|\,\d z =o(1). \label{eq:E3}
\end{align}
Notice that $\ep_1(t)$ and $\ep_3(t)$ are bounded by the same integral. 
For the last integral in \eqref{approx2}, because $w_N \kappa'(\|w\|^2) = \partial_{w_N} [\kappa(\|w\|^2)/2]$ we obtain with notation $w=(w', w_N)$
\begin{align}
\int_{\{w \in \R^N :\, w_N>0\}} w_N \kappa'(\|w\|^2) \,\d w
&= -\frac1{2}\int_{\R^{N-1}} \kappa(\|w'\|^2) \,\d w'  \notag \\
&= -\frac{S_{N-2}}{2} \int_{0}^\infty \kappa(r^2)r^{N-2}\,\d r. \label{eq:calc}
\end{align}
  Putting \eqref{eq:cut}, \eqref{approx2}, \eqref{eq:calc} together gives the conclusion \eqref{eq:asymptotic_normal}, where the error term $o(1)$ is  bounded by $\|\ep_1(t)\| +\|\ep_2(t)\| +\|\ep_3(t)\|$.  Because the constant $C$ in \eqref{eq:vol} is essentially the H\"older constant for $\nabla f$,  the estimate \eqref{eq:vol} and hence \eqref{eq:E2} are uniform over a neighborhood of $\pp\in \partial\AA$. 
\end{proof}

One can also obtain the rate of convergence. Since the error term $o(1)$ in \eqref{eq:asymptotic_normal} is bounded by $\|\ep_1(t)\| +\|\ep_2(t)\| +\|\ep_3(t)\|$, making the latter quantity smaller gives a better rate. This is achieved typically when the three terms are comparable.  Here is an example from the heat equation. 
\begin{example}[Heat equation]
Let $D>0$ and $\kappa(r) := (4\pi D)^{-N/2}\exp(-\frac{r}{4D})$. Then $h$ satisfies $h_t=D\Delta h$. In this case $\xi=(4\pi D)^{-\frac{1}{2}}$ and we can verify 
\begin{equation}\label{eq:asymptotic_heat}
\nabla_x h(t,x) = \frac{-n(x)}{\sqrt{4\pi Dt}} \left(1+O\left(t^{\frac{\alpha}{2}}\log^{\frac{N+\alpha}{2}}\frac1{t}\right) \right) \quad  \text{as}\quad t\to 0^+. 
\end{equation}
To see this, we set $R=\sqrt{3D\log (1/t)}$ and proceed as 
\begin{eqnarray*}
\|\ep_1(t)\|, \|\ep_3(t)\| &\le & \int_{\{z\in\R^N:\, \|z\|>R\}} \|z\| e^{-\frac{\|z\|^2}{4D}}\frac{\d z}{(4\pi D)^{N/2}}  \\
&\le& e^{-\frac{0.9}{4D}R^2} \underbrace{\int_{\R^N} \|z\| e^{-\frac{0.1}{4D}\|z\|^2}\frac{\d z}{(4\pi D)^{N/2}}}_{=:\beta} =  \beta t^{0.675}. 
\end{eqnarray*}
Observe from \eqref{eq:E2} that $\|\ep_2(t)\| = O(t^{\frac{\alpha}{2}}R^{N+\alpha})=O(t^{\frac{\alpha}{2}}\log^{\frac{N+\alpha}{2}}(1/t))$, which  converges more slowly to $0$ than the bounds obtained for $\ep_1(t),\ep_3(t)$, so that \eqref{eq:asymptotic_heat} holds. 
\end{example}

%%%%%%%%%%%%%%%%%%%%%%%%%%%%%%%%%%%%%%%%%%%
\subsection{Singular boundary points}\label{sec3.2}

We analyze $\nabla_x h(t,x)$ as $t\to0^+$ at singular points $x$ in $\partial\AA$.

\subsubsection{Approximating cones}

The crucial step of Theorem \ref{Thm:normal}  was \eqref{approx1}, where the integral region for $z$ was approximated by the half-space $H-x$. From this perspective Theorem \ref{Thm:normal} can be generalized to boundary singular points. A  subset $C$ of $\R^N$ is called a \emph{cone at $\pp\in \R^N$} if for every $z \in C -\pp$ the half-line $\{t z: t>0\}$ is contained in $C-\pp$. 

\begin{theorem}\label{Thm:gen} Let $N\ge2$ and $\pp\in \partial\AA$. Suppose that there is an open cone $C$ at $\pp$ (called an approximating cone of $\AA$ at $\pp$) that well approximates $\AA$ in small neighborhoods of $\pp$, in the sense that there is a function $(0,1) \ni t\mapsto R(t) \in (0,\infty)$ such that 
\begin{align}
&\lim_{t\to0^+}R(t)=\infty, \quad \lim_{t\to0^+}\sqrt{t}R(t)=0 \quad \text{and}  \quad \lim_{t\to0^+}t^{-\frac{N}{2}}|U_t \Delta V_t| =0,  \label{eq:UV}
\end{align}
where 
\begin{align*}
U_t &:=\{w\in  \AA-\pp: \|w\|\le \sqrt{t}R(t) \}, \\
V_t &:=\{w \in C - \pp: \|w\| \le \sqrt{t}R(t) \}. 
\end{align*}
Then we have 
\begin{equation} \label{eq:normal3}
\nabla_x h(t,\pp) = -  \frac{2}{\sqrt{t}}  \left(\int_{C-\pp} z \kappa'(\|z  \|^2) \,\d z + o(1)\right) \quad  \text{as}\quad t\to 0^+. 
\end{equation}
\end{theorem}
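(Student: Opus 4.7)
The strategy is to mimic the proof of Theorem \ref{Thm:normal}, replacing the tangent half-space $H$ by the approximating cone $C$. Setting $x=\pp$ in \eqref{eq:heat_nabla} gives
\[
\nabla_x h(t,\pp) = -\frac{2}{\sqrt{t}}\int_{\{z\in\R^N:\, \pp+\sqrt{t}z\in \AA\}} z\,\kappa'(\|z\|^2)\,\d z,
\]
and I truncate this integral to the ball $\{\|z\|\le R(t)\}$ exactly as in \eqref{eq:cut}. The discarded part $\ep_1(t)$ is controlled by the tail integral \eqref{eq:e1}, which tends to zero since $R(t)\to\infty$ together with assumption $(\kappa 2)$.

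Next I approximate the truncated region $\{z:\pp+\sqrt{t}z\in \AA,\ \|z\|\le R(t)\} = t^{-1/2}U_t$ by the analogous region built from $C$, namely $\{z:\pp+\sqrt{t}z\in C,\ \|z\|\le R(t)\} = t^{-1/2}V_t$. Because $C$ is a cone at $\pp$, the set $C-\pp$ is invariant under multiplication by any positive scalar, and hence $t^{-1/2}V_t=\{z\in C-\pp:\,\|z\|\le R(t)\}$. The corresponding error is bounded exactly as in \eqref{eq:E2}:
\[
\Bigl(\sup_{z\in\R^N}\|z\|\cdot|\kappa'(\|z\|^2)|\Bigr)\, t^{-N/2}\,|U_t\,\Delta\, V_t|.
\]
The supremum is finite because $\kappa\in C^1$ and $r\kappa'(r^2)\to 0$ by $(\kappa 2)$, while the second factor vanishes by the approximation hypothesis \eqref{eq:UV}.

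Finally, I enlarge the integration back from $\{z\in C-\pp:\,\|z\|\le R(t)\}$ up to the full cone $C-\pp$; the resulting error is again a tail integral, identical in form to \eqref{eq:E3}, and so vanishes as $t\to 0^+$. Collecting these three error terms yields \eqref{eq:normal3}. There is no real obstacle beyond what the previous proof already handled, because the geometric content of the theorem is packaged entirely into the hypothesis \eqref{eq:UV}; the only point that deserves an explicit check is the identity $\{z:\sqrt{t}z\in C-\pp\}=C-\pp$, which is immediate from the cone property applied to both $z\mapsto\sqrt{t}z$ and $z\mapsto z/\sqrt{t}$.
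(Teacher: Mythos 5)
Your proposal is correct and mirrors the paper's own proof, which likewise truncates to the ball of radius $R(t)$, replaces $t^{-1/2}U_t$ by $t^{-1/2}V_t$ at a cost controlled by $t^{-N/2}|U_t\,\Delta\,V_t|$, extends to the full cone via a tail estimate, and invokes the scale invariance $t^{-1/2}V_t=\{z\in C-\pp:\|z\|\le R(t)\}$. Your explicit justification of the finiteness of $\sup_z\|z\|\cdot|\kappa'(\|z\|^2)|$ from $\kappa\in C^1$ and $(\kappa 2)$ is a welcome bit of extra detail that the paper leaves implicit.
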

\begin{proof}
The proof is just to extract the corresponding ideas from the proof of Theorem  \ref{Thm:normal}, see also Example \ref{ex:normal} below. Estimate \eqref{eq:cut} still holds with the same proof. As a consequence of \eqref{eq:UV}, we have 
\begin{align*}
 \int_{\{z\in \R^N:\, \|z\|\leq R(t),\, \pp+\sqrt{t}z \in \AA\}} z \kappa'(\|z\|^2) \,\d z 
 &=   \int_{U_t/\sqrt{t}} z \kappa'(\|z\|^2) \,\d z  \\
&=  \int_{V_t/\sqrt{t}} z \kappa'(\|z\|^2) \,\d z +o(1)  \\
&=  \int_{\{z\in C - \pp:\, \|z\|\leq R(t)\}} z \kappa'(\|z\|^2) \,\d z +o(1) \\
&= \int_{C - \pp} z \kappa'(\|z\|^2) \,\d z + o(1).  
\end{align*}
The third equality above is based on the fact that $z \in C-\pp$ is equivalent to $\sqrt{t}z \in C-\pp$. 
The last equality uses an estimate similar to \eqref{eq:e1}. 
\end{proof}

\begin{remark} If the main term $v(p):=\int_{C-\pp} z \kappa'(\|z  \|^2) \,\d z$ is nonzero, then $n(p):=v(p)/\|v(p)\|$ can naturally be selected as a definition of normal vector at $p$.  
\end{remark}

\begin{remark} A closely related notion is \emph{tangent cone} (see e.g.~\cite{Mo}). 
The closure of an approximating cone coincides with a tangent cone in some examples. 
\end{remark}

\begin{example}[Relationships to Theorem \ref{Thm:normal}] \label{ex:normal} If $\partial \AA$ is a $C^{1,\alpha}$-surface at $\pp$ then we can take $C$ to be the half-space separated by the tangent plane of $\partial\AA$ at $\pp$ and  containing the half-line $\{\pp - t n(\pp): t>0\}$. The sets $U_t$ and $V_t$ coincide with $\sqrt{t}B$ and $\sqrt{t}B'$ in the proof of Theorem \ref{Thm:normal}, respectively. 
\end{example}

\begin{example}[Cusps]
Theorem \ref{Thm:gen} can be applied to some cusps.  Let $ \AA \subset \R^2$ be an open set such that 
\[
\AA \cap \{x \in \R^2: \|x\|<\ep\} = \{(x_1,x_2):  x_1>0, ~0<x_2 < x_1^{1+\alpha}\}  \cap \{x: \|x\|<\ep\}
\] 
for some $\ep,\alpha>0$. At the origin $p=(0,0)$, one can easily see that $t^{-1}|U_t| =O(t^{\frac{\alpha}{2}}R^{2+\alpha})$, so that \eqref{eq:UV} holds for the empty cone $C=\emptyset$ and for the function $R(t) = \log\frac1{t}$. The main term $\int_{C} z \kappa'(\|z  \|^2) \,\d z$ vanishes. 

\end{example}

\begin{example}[Rectangular cuboid]  \label{ex_rectangular}
Let $\alpha,\beta,\gamma>0$ and $\AA = (0,\alpha) \times (0,\beta) \times (0,\gamma)$. We fix any function $t\mapsto R(t)>0$  such that $\lim_{t\to0^+}R(t)=\infty, \lim_{t\to0^+}\sqrt{t}R(t)=0$. 

Let $\pp =(m,0,0) \in \partial \AA$ with $0<m<\alpha$. Then $C :=\{(x,y,z) \in \R^3: y,z>0\}$ is a desired cone at $\pp$; indeed, $U_t = V_t$ for sufficiently small $t>0$, so that condition \eqref{eq:UV} obviously holds. Considering formula \eqref{eq:normal} and the symmetry, $\sqrt{t}\nabla_x h(t,\pp)$ converges to the vector $(0,-1,-1)$ up to a positive constant factor. 

On the other hand, let $\pp=(0,0,0)\in \partial \AA$. Then the cone $C :=\{(x,y,z) \in \R^3: x, y,z>0\}$ at $\pp$ satisfies the assumptions of Theorem \ref{Thm:gen}. Again by symmetry, $\sqrt{t}\nabla_x h(t,\pp)$ converges to the vector $(-1,-1,-1)$ up to a positive constant factor. 
\end{example}

\subsubsection{Corners} 
We consider a class of singular points that admit approximating cones. 
Let $\pp \in \partial\AA$ and $0<\alpha \le 1$. Suppose that there is a  neighborhood $U \subseteq \R^N$ of $\pp$, $C^{1,\alpha}$-functions $g_i\colon U \to \R, i=1,2,\dots, k$, and a partition $\{i: 1\le i \le k\} = I_1 \cup I_2 \cup \cdots \cup I_m$ ($I_1,I_2,\dots, I_m$ being disjoint) such that 
$g_i(\pp)=0$ for all $1\le i \le k$ and 
\begin{equation}\label{eq:MF}
 \AA \cap U = \bigcup_{j=1}^m \{x \in U: g_i(x)>0~\text{for all $i\in I_j$}\}. 
\end{equation}
We call $\pp$ a \emph{$C^{1,\alpha}$-corner of $\AA$} if the following conditions are fulfilled: 
\begin{description}[($\kappa1$)]
\item[(C1)]{there exists a vector $\ell \in \R^N$ such that $(\nabla g_i(\pp),\ell)>0$ for all $1\le i \le k$,}  
\item[(C2)]{for any $i\ne j$ the surfaces $\{x: g_i(x)=0\}$ and $\{x: g_j(x)=0\}$ intersect transversally in a neighborhood of $\pp$, or equivalently, $\nabla g_i(\pp)$ and $\nabla g_j(\pp)$ are linearly independent. } 
\end{description}
Condition (C1) and the implicit function theorem imply that all surfaces $\{x: g_i(x)=0\}$ can locally be expressed as the graphs of $C^{1,\alpha}$-functions defined on a common hyperplane $H_\ell$ that is perpendicular to $\ell$. Condition (C2) guarantees that the subset $\{x: g_i(x) =0, g_j(x)=0\}$ in a neighborhood of $\pp$ is an $(N-2)$-dimensional $C^{1,\alpha}$-submanifold of $\R^N$. 

\begin{remark}
The term ``corner'' is defined differently in the literature, e.g.~in \cite{Mi}.  
\end{remark}

\begin{remark} In case $m=1$, condition (C1) is called the Mangasarian-Fromovitz constraint qualification that is considered in the context of nonlinear programing (see e.g.~\cite{JJT,P} and references therein).  
\end{remark}

%In this case, we can take the cone
%\[
%C= \bigcup_{j=1}^m \{x \in \R^N: (\nabla g_i (\pp),x-\pp)>0~\text{for all $i \in I_j$}\}. 
%\]
%The idea of the proof of Proposition \ref{prop:corner} can be used to prove that this $C$ is indeed an approximating cone of $\AA$ at $\pp$. 

%In two dimensions, A real-valued function $f$ defined in a neighborhood of $c \in \R$ is said to have a \emph{$C^{1,\alpha}$-corner at $c$} if it is continuous at $c$ and there is $\delta>0$ such that each of 
% $f|_{[c-\delta,c]}$ and $f|_{[c,c+\delta]}$ is a restriction of a $C^{1,\alpha}$-function defined in a (larger) open interval.  For an open subset $\AA \subseteq\R^2$, a point $\pp\in \partial \AA$ is a $C^{1,\alpha}$-corner of $\AA$ if and only if $\partial \AA$ can be locally written as, in a certain orthonormal basis, the graph of a function $f$ having a $C^{1,\alpha}$-corner at a point $c \in \R$  such that $(c,f(c))=\pp$. If $f$ is $C^{1,\alpha}$ in a neighborhood of $c$ then $\pp$ has index one; otherwise $\pp$ has index two. 

\begin{theorem}\label{prop:corner}  Let $N\ge2$ and $\pp$ be a $C^{1,\alpha}$-corner of $\AA$. Suppose that $\AA$ is written as \eqref{eq:MF} in a neighborhood $U$ of $\pp$. Then 
\begin{equation}\label{eq:C}
C= \bigcup_{j=1}^m \{x \in \R^N: (\nabla g_i (\pp),x-\pp)>0~\text{for all $i \in I_j$}\}
\end{equation}
 is an approximating cone of $\AA$ at $\pp$ and hence formula \eqref{eq:normal3} holds. 
\end{theorem}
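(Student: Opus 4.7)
The plan is to apply Theorem \ref{Thm:gen} with the cone $C$ from \eqref{eq:C}, so the work reduces to verifying condition \eqref{eq:UV}. I would fix $R(t):=\log(1/t)$, which automatically satisfies $R(t)\to\infty$ and $\sqrt{t}R(t)\to0$ as $t\to0^+$; the task then concentrates entirely on bounding $t^{-N/2}|U_t\Delta V_t|$.

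The first step is to linearize each constraint function. Since each $g_i$ is $C^{1,\alpha}$ with $g_i(\pp)=0$, there exist $K>0$ and a neighborhood $U_0\subseteq U$ of $\pp$ such that
\begin{equation*}
|g_i(x)-(\nabla g_i(\pp),x-\pp)|\le K\|x-\pp\|^{1+\alpha},\qquad x\in U_0,\ 1\le i\le k.
\end{equation*}
Write $G_i:=\{x\in U_0:g_i(x)>0\}$ and $L_i:=\{x\in U_0:(\nabla g_i(\pp),x-\pp)>0\}$. The inequality above forces the symmetric difference $G_i\Delta L_i$ into the slab
\begin{equation*}
S_i:=\{x\in U_0: |(\nabla g_i(\pp),x-\pp)|\le K\|x-\pp\|^{1+\alpha}\}.
\end{equation*}
Condition (C1) guarantees $\nabla g_i(\pp)\ne 0$, and a direct slab-volume computation in orthonormal coordinates with one axis along $\nabla g_i(\pp)/\|\nabla g_i(\pp)\|$ gives, for all sufficiently small $r>0$,
\begin{equation*}
|S_i\cap \{x:\|x-\pp\|\le r\}|\le K_i\,r^{N+\alpha},
\end{equation*}
with a constant $K_i>0$ depending only on $K$, $N$ and $\|\nabla g_i(\pp)\|$.

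The second step is a set-theoretic reduction that packages all $g_i$ together. Using the hypothesis $\AA\cap U=\bigcup_{j=1}^m\bigcap_{i\in I_j}G_i$ and the analogous description $C\cap U_0=\bigcup_{j=1}^m\bigcap_{i\in I_j}L_i$, the elementary inclusions $(\bigcup_j X_j)\Delta(\bigcup_j Y_j)\subseteq\bigcup_j(X_j\Delta Y_j)$ and $(\bigcap_i X_i)\Delta(\bigcap_i Y_i)\subseteq\bigcup_i(X_i\Delta Y_i)$ yield
\begin{equation*}
(\AA\cap U_0)\Delta(C\cap U_0)\subseteq\bigcup_{i=1}^k (G_i\Delta L_i)\subseteq\bigcup_{i=1}^k S_i.
\end{equation*}
Translating by $-\pp$ and restricting to $\|w\|\le\sqrt{t}R(t)$, this gives
\begin{equation*}
t^{-N/2}|U_t\Delta V_t|\le t^{-N/2}\sum_{i=1}^k\big|S_i\cap\{x:\|x-\pp\|\le\sqrt{t}R(t)\}\big|=O\big(t^{\alpha/2}R(t)^{N+\alpha}\big),
\end{equation*}
which tends to $0$ for $R(t)=\log(1/t)$. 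Thus \eqref{eq:UV} is verified and Theorem \ref{Thm:gen} yields \eqref{eq:normal3}.

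The main obstacle I expect is the set-theoretic bookkeeping in the second step, namely phrasing the union-of-intersections structure cleanly enough that the single-surface slab estimates combine without loss, and matching $\AA\cap U_0$ and $C\cap U_0$ with $\pp+U_t$ and $\pp+V_t$ on the relevant scale. Condition (C2) plays no quantitative role in the volume bound; it enters only in ensuring that the polyhedral cone $C$ is geometrically non-degenerate, which is what makes Theorem \ref{Thm:gen} meaningful in the first place.
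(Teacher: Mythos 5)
Your proof is correct and takes a genuinely cleaner route than the paper's. The paper works with the graph description guaranteed by (C1): it writes $\AA-\pp$ and $C-\pp$ as subgraph/supergraph regions $\{z_N > \min_j\max_{i\in I_j} f_i(z')\}$ over a common hyperplane, then splits the ball of radius $\sqrt{t}R$ into the ``switching'' region $W_t$ (near the pairwise intersections $\{g_i=g_j=0\}$, where (C2) gives an $(N-2)$-dimensional submanifold structure and hence $|W_t|=O(t^{(\alpha+N-1)/2}R^{\alpha+N-1})$) and its complement, using a Lipschitz estimate on $W_t$ and a H\"older estimate on the complement to arrive at $|U_t\Delta V_t|=O(t^{(N+\alpha)/2}R^{N+\alpha})$. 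You instead linearize each $g_i$ directly, trap $G_i\Delta L_i$ in a slab $S_i$ of volume $O(r^{N+\alpha})$, and then use the elementary monotonicity of symmetric difference under unions and intersections to deduce $(\AA\cap U_0)\Delta(C\cap U_0)\subseteq\bigcup_i S_i$. This gives the same bound $O(t^{(N+\alpha)/2}R^{N+\alpha})$ with much less bookkeeping: you never need the graph parametrization, the $W_t$ decomposition, or the submanifold structure from (C2). The only place (C1) enters your argument is via $\nabla g_i(\pp)\ne 0$, which is exactly what makes the slab thin; you should perhaps flag explicitly that without this the slab $S_i$ degenerates to the whole ball and the estimate collapses. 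Your closing remark about (C2) is fair but slightly overstated — Theorem~\ref{Thm:gen} does not require a non-degenerate cone for its conclusion, so (C2) is not needed at all for the quantitative statement proved here; it only matters for interpreting the limit vector $\int_{C-\pp} z\,\kappa'(\|z\|^2)\,\d z$ as a meaningful generalized normal.
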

\begin{proof} 
Let $R=R(t)>0$ be a function such that $\sqrt{t}R(t)\to0$ as $t\to0^+$. Let $U_t=\{z \in \AA-p: \|z\|\leq \sqrt{t}R\}$ and $V_t=\{z\in C - \pp: \|z\|\le \sqrt{t}R\}$.

\vspace{2mm}
\noindent
\textbf{Two dimensions.} Conditions (C1) and (C2) imply that at most two surfaces $\{g_i=0\}$ suffice to describe $\AA$ locally. This means that, for some $U = p+ (-\delta,\delta)\times (-\ep,\ep)$ in a suitable orthonormal basis, $\AA \cap U$ in \eqref{eq:MF} is of the form 
\begin{align} 
&\pp + \{(x,y): -\delta < x \le 0,  f_1(x) < y <\ep \} \notag  \\
&\qquad\qquad \cup \{(x,y): 0\le x < \delta,~ f_2(x) < y < \ep \}, \label{eq:A}
\end{align}
where $f_1, f_2$ are $C^{1,\alpha}$-functions from $(-\delta,\delta)$ into $(-\ep,\ep)$ such that $f_1(0)=f_2(0)=0$.  
%Note that $\AA\cap U-p$ can be written as $\{g_1 >0\}\cap \{g_2>0\}$, $\{g>0\}$ and $\{g_1 >0\}\cup \{g_2>0\}$ respectively when  $f_1'(0)< f_2'(0)$, $f_1'(0)=f_2'(0)$ and $f_1'(0)> f_2'(0)$, where $g_i(x,y) :=y-f_i(x)$.% and 
%\[
%g(x, y) := 
%\begin{cases} g_1(x,y)& \text{if~} x\le0, \\
% g_2(x,y)& \text{if~} 0\le x. \\
% \end{cases}
%\] 
%Condition (C1) is fulfilled for the vector $\ell =(0,1)$. 
%If $f_1'(0)=f_2'(0)$ then $g$ is a $C^{1,\alpha}$-function so that $\partial \AA$ is a $C^{1,\alpha}$-surface at $\pp$. 
%By rotating and translating $\AA$ and restricting to a smaller neighborhood $U$ if necessary, we may assume that $p=0$ and $\{x\in U: g_i(x)>0\} =\{(f_i(x'), x'): x' \in U'\}$ for some neighborhood $U' \subseteq \R^{N-1}$ of $0$ and $C^{1,\alpha}$-functions $f_i$ that vanish at $0$. 
Let $n_1(\pp)$ be the limit of $n(x)$ at $x=\pp$ along a connected component of $\partial\AA\cap U$ and $n_2(\pp)$ the limit along the other component, and   
\begin{equation}\label{eq:n_corner}
n(\pp):=\frac{n_1(\pp)+n_2(\pp)}{\|n_1(\pp)+n_2(\pp)\|}. 
\end{equation}
Note that $n_1(\pp) + n_2(\pp)$ never vanishes, so that $n(\pp)$ is well defined. 
Then the cone $C$ in \eqref{eq:C} is exactly the connected component surrounded by the left and right tangent lines of $\AA$ at $\pp\in \partial \AA$ and containing the half-line $\{\pp -t n(\pp): t>0\}$, see Fig.~\ref{fig2}.

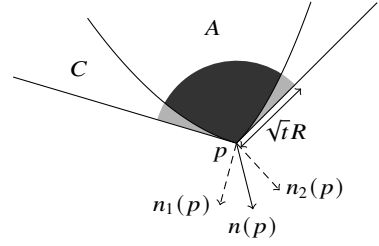
\begin{figure}[t]
\sidecaption[t]
%\begin{center}
\begin{tikzpicture}[scale =1.1]
\draw[domain=0:0.9] plot(\x, \x*\x+\x);
\draw[thin, domain=-1.8:0] plot(\x, 0.3*\x*\x-0.3*\x);  
\draw (0,0) -- (1.7,1.7);
\draw (0,0) -- (-2.7,0.8);
\draw node at (-0.3,1.4) {$\AA$};  
\draw node at (-1.9,0.9) {$C$};  
\draw node at (0,0.05) [below left] {$\pp$};  
\path[fill, opacity=0.8] plot[domain=0:0.54369] (\x, \x*\x+\x) arc [start angle = 57.2958, end angle = 150.688, radius = 1] plot[domain=-0.87192:0] (\x, 0.3*\x*\x-0.3*\x);
\path[fill, opacity=0.3] plot[domain=0:0.54369] ({\x}, {\x*\x+\x)}) arc [start angle = 57.2958, end angle = 45, radius = 1] plot[domain=0:0.707] (\x, \x);
\path[fill, opacity=0.3] plot[domain=0:-0.957826] (\x, -0.3*\x)  arc [start angle = 163.301, end angle = 150.688, radius = 1] plot[domain=-0.87192:0] (\x, 0.3*\x*\x-0.3*\x);
\draw[->] (0,0) -- (0.2,-0.8); 
\draw node at (0.2,-0.8) [below] {$n(\pp)$}; 
\draw[<->] (0.05,-0.05) -- (0.78,0.66); 
\draw node at (0.6,0.15)  {\small$\sqrt{t}R$};  
\draw[->, densely dashed] (0,0) -- (-0.2,-0.75); 
\draw node at (-0.2,-0.75) [left] {$n_1(\pp)$}; 
\draw[->, densely dashed] (0,0) -- (0.52,-0.57); 
\draw node at (0.5,-0.55) [right] {$n_2(\pp)$}; 
\end{tikzpicture}
%\end{center}
\caption{Approximating $\AA$ with $C$ near $\pp\in \partial \AA$ in two dimensions. The black region is $U_t$ and the gray region is $U_t\Delta V_t$.}\label{fig2}
\end{figure}

Since  for sufficiently small $\sqrt{t}R$
\begin{align*}
U_t\Delta V_t 
&\subseteq  \{(x, s f_1(x)+(1-s) f_1'(0)x): -\delta < x  \le 0, 0 \le s \le 1 \}  \\ 
&\qquad\cup  \{(x, s f_2(x)+(1-s) f_2'(0)x): 0\le x < \delta,0 \le s \le 1 \}, 
\end{align*}
the corresponding argument in the proof of Theorem \ref{Thm:normal} yields 
\begin{equation} \label{eq:UV2}
|U_t\Delta V_t| \le  \int_{[-\sqrt{t}R, \sqrt{t}R]} \max_{1\le i \le 2}|f_i(x)-f_i'(0)x|\,\d x = O(t^\frac{\alpha+2}{2}R^{2+\alpha}), 
\end{equation}
so that condition \eqref{eq:UV} is satisfied for a suitable function $R(t)$. See also Fig.\ \ref{fig2}. 
%We only work with a typical situation in dimensions two. The idea works in any dimensions, but then the arguments would be more complex.  Suppose that $U := p+ (-\delta,\delta)\times (-\ep,\ep)$ and $\AA \cap U -p$ is of the form (in an orthonormal basis)
%\begin{equation} \label{eq:A}
%\{(x,y): -\delta < x \le 0,  f_1(x) < y <\ep \} \cup \{(x,y): 0\le x < \delta,~ f_2(x) < y < \ep \}, 
%\end{equation}
%where $f_1, f_2$ are $C^{1,\alpha}$-functions from $(-\delta,\delta)$ into $(-\ep,\ep)$ such that $f_1(0)=f_2(0)=0$.  Note that $\AA\cap U-p$ can be written as $\{g_1 >0\}\cap \{g_2>0\}$, $\{g>0\}$ and $\{g_1 >0\}\cup \{g_2>0\}$ respectively when  $f_1'(0)< f_2'(0)$, $f_1'(0)=f_2'(0)$ and $f_1'(0)> f_2'(0)$, where $g_i(x,y) :=y-f_i(x)$ and 
%\[
%g(x, y) := 
%\begin{cases} g_1(x,y)& \text{if~} x\le0, \\
% g_2(x,y)& \text{if~} 0\le x. \\
% \end{cases}
%\] 
%If $f_1'(0)=f_2'(0)$ then $g$ is $C^{1,\alpha}$-function so that $\partial \AA$ is a $C^{1,\alpha}$-surface at $0$. 

\vspace{2mm}
\noindent
\textbf{Higher dimensions.}  Let $N\ge3$. By condition (C1) there are $C^{1,\alpha}$-functions $f_i$ with $f_i(0)=0$ such that $\AA-p$ is  written as 
\[
\left\{(z', z_N): z_N > \min_{1\le j \le m} \max_{i \in I_j} f_i(z') \right\}
\] 
in a neighborhood of 0 in an orthonormal basis. Correspondingly, the cone $C-\pp$ is of the form 
\[
\left\{(z', z_N): z_N > \min_{1\le j \le m} \max_{i \in I_j}\, (\nabla f_i(0),z') \right\}. 
\] 
For $i\ne j$, let $W_t^{i,j}\subseteq  \{z' \in \R^{N-1}: \|z'\|\le \sqrt{t}R\}$ be the region where $\partial\AA-\pp$ and $\partial C-\pp$ are expressed as the graphs of $f_i(z')$ and $(\nabla f_j(0), z')$, respectively. Note that $W_t^{i,j}$  is a region surrounded by the circle $\{z': \|z'\|=\sqrt{t}R\}$, the projection of the $C^{1,\alpha}$-submanifold $\{z: g_i(\pp+z)=g_j(\pp+z)=0\}$  onto the $z'$-plane and the projection of  the hyperplane $\{z: (\nabla g_i(\pp),z)= (\nabla g_j(\pp),z)\}$ onto the $z'$-plane.   Let $W_t:= \cup_{i\ne j}W_t^{i,j}.$  Similar to the proof of \eqref{eq:UV2} in two dimensions, one can show $|W_t| = O(t^\frac{\alpha+N-1}{2}R^{\alpha+N-1})$.  For sufficiently small $t>0$, the decomposition   
\begin{align*}
&U_t\Delta V_t \subseteq  \bigcup_{i\ne j} \{(z',  s f_i(z') +(1-s) (\nabla f_j(0),z')): z' \in W_t, 0\le s \le 1\}  \\ 
&\quad\quad\qquad \cup \bigcup_{i=1}^k \{( z', s f_i(z') +(1-s) (\nabla f_i(0),z')): z' \in \R^{N-1}\setminus W_t, 0\le s \le 1 \} 
\end{align*}
holds.  We obtain, by using the estimate $|f_i(z') - (\nabla f_j(0),z')| \le C_1\|z'\|$ in a neighborhood of $0$, 
\begin{align*}
|(U_t\Delta V_t)\cap (W_t \times \R )| 
&\le  \int_{W_t}\max_{i\ne j} | f_i(z') - (\nabla f_j(0),z') |\,\d z' \\
&\le C_1\sqrt{t}R|W_t|=  O(t^\frac{\alpha+N}{2}R^{\alpha+N}), 
\end{align*}
and also, by using the estimate $|f_i(z') -(\nabla f_i(0), z') | \le C_2 \|z'\|^{1+\alpha}$, 
\begin{align*}
|(U_t\Delta V_t)\cap ( (\R^{N-1}\setminus W_t)\times \R)| 
&\le  \int_{\substack{z'\in \R^{N-1}\setminus W_t\\ \|z'\|\le \sqrt{t}R}} 
\max_{1 \le i \le k}|f_i(z') -(\nabla f_i(0), z')|\,\d z' \\
&\le  C_2\int_{\|z'\|\le \sqrt{t}R}
\|z'\|^{1+\alpha}\,\d z' = O(t^\frac{\alpha+N}{2}R^{\alpha+N}), 
\end{align*}
so that $|U_t\Delta V_t| = O(t^\frac{\alpha+N}{2}R^{\alpha+N})$, i.e., condition \eqref{eq:UV} is fulfilled. %Higher dimensions can be treated similarly to the case of three dimensions. 
%Similar to the proof of Theorem \ref{Thm:normal}, we approximate $B=\{z \in \R^2: \pp+\sqrt{t}z \in \AA, \|z\|\leq R\}$  with $B'':=\{z\in \R^2: \|z\|<R, z\in C - \pp\}$. Using the graph realization of $\partial\AA$ above and estimating the volumes of the two gray regions in Fig.~\ref{fig2}, we can verify $|B\Delta B''|=O(t^\frac{\alpha}{2}R^{2+\alpha})$. Therefore, $U_t:= \sqrt{t}B$, $V_t:=\sqrt{t}B''$  and any function $R=R(t)$ such that $R(t)\to\infty$ and $t^\frac{\alpha}{2}R(t)^{2+\alpha} \to 0$ as $t\to0^+$ satisfy condition  \eqref{eq:UV}. 
\end{proof}

%\begin{remark}
%The definition of $C^{1,\alpha}$-corners excludes certain natural singular points because the index $k$ cannot be greater than $N$. For example, a vertex of an octahedron in dimensions three is excluded because four hyperplanes intersect at a vertex. In this example, condition \eqref{eq:UV} still (trivially) holds because an octahedron itself is exactly a cone in a neighborhood of a vertex as in the case of Example \ref{ex_rectangular}. To include such examples,  one can simply allow $k$ to be any positive integer in \eqref{eq:Q} (then $k$ would not be uniquely determined by $\pp$ and $\AA$), or one can use the Mangasarian-Fromovitz condition below. 
%\end{remark}

\subsubsection{An explicit formula in two dimensions}\label{sec:explicit}

At $C^{1,\alpha}$-corners $\pp$ in two dimensions, the main term $\int_{C-\pp} z \kappa'(\|z  \|^2) \,\d z$ of \eqref{eq:normal3}  can be more concretely calculated. 
%Suppose that $U := p+ (-\delta,\delta)\times (-\ep,\ep)$ and $\AA \cap U -p$ is of the form (in an orthonormal basis)
%\begin{equation} \label{eq:A}
%\{(x,y): -\delta < x \le 0,  f_1(x) < y <\ep \} \cup \{(x,y): 0\le x < \delta,~ f_2(x) < y < \ep \}, 
%\end{equation}
%where $f_1, f_2$ are $C^{1,\alpha}$-functions from $(-\delta,\delta)$ into $(-\ep,\ep)$ such that $f_1(0)=f_2(0)=0$.  Note that $\AA\cap U-p$ can be written as $\{g_1 >0\}\cap \{g_2>0\}$, $\{g>0\}$ and $\{g_1 >0\}\cup \{g_2>0\}$ respectively when  $f_1'(0)< f_2'(0)$, $f_1'(0)=f_2'(0)$ and $f_1'(0)> f_2'(0)$, where $g_i(x,y) :=y-f_i(x)$ and 
%\[
%g(x, y) := 
%\begin{cases} g_1(x,y)& \text{if~} x\le0, \\
% g_2(x,y)& \text{if~} 0\le x. \\
 %\end{cases}
%\] 
%The Mangasarian-Fromovitz condition is fulfilled because the vector $\ell =(0,1)$ suffices. 
%If $f_1'(0)=f_2'(0)$ then $g$ is $C^{1,\alpha}$-function so that $\partial \AA$ is a $C^{1,\alpha}$-surface at $0$. 

%Let $n_1(\pp)$ be the limit of $n(x)$ at $x=\pp$ along a connected component of $\partial\AA\cap \{x\in \R^2: 0<\|x-\pp\|<\ep\}$ ($\ep>0$ is sufficiently small) and $n_2(\pp)$ the limit along the other component, and   
%\begin{equation}\label{eq:n_corner}
%n(\pp):=\frac{n_1(\pp)+n_2(\pp)}{\|n_1(\pp)+n_2(\pp)\|}. 
%\end{equation}
%Note that $n_1(\pp) + n_2(\pp)$ never vanishes, so that $n(\pp)$ is well defined. 
%Then the cone $C$ in \eqref{eq:C} is exactly the connected component surrounded by the left and right tangent lines of $\AA$ at $\pp\in \partial \AA$ and containing the half-line $\{\pp -t n(\pp): t>0\}$, see Fig.~\ref{fig2}. 

\begin{proposition}\label{Thm:piecewise} 
%Let $N=2, \alpha \in (0,1]$ and $\pp\in \partial\AA$ be a $C^{1,\alpha}$-corner of $\AA$. Suppose that $\AA-p$ can locally be written as \eqref{eq:A}.  
%Let $n(\pp)$ be the vector defined in \eqref{eq:n_corner}. 
Let $N=2$ and $\pp$ be a $C^{1,\alpha}$-corner of $\AA$. Let $n(\pp)$ be defined as \eqref{eq:n_corner}. We define
$$
\xi = \xi_\pp = 2\sin \frac{\phi_\pp}{2} \int_0^\infty \kappa(r^2)\,\d r
$$
 where $\phi_p\in (0,2\pi)$ is the inner angle of the curve $\partial \AA$ at $\pp$. Then formula \eqref{eq:asymptotic_normal} holds true at $x=\pp$ in the sense of pointwise convergence. 
 
% If $\pp\in\partial\AA$ is a $C^{1,\alpha}$-cusp then the following formula holds: 
% \[
%\nabla_x h(t,x) = \frac{o(1)}{\sqrt{t}}, \qquad t\to0^+.  
 %\]
 \end{proposition}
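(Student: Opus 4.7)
The proof is essentially a direct computation once Theorems \ref{Thm:gen} and \ref{prop:corner} are in hand. The plan is to evaluate the main term $\int_{C-\pp} z\,\kappa'(\|z\|^2)\,\d z$ explicitly, using that in two dimensions the approximating cone $C$ is a planar sector with inner angle $\phi_\pp$ and bisector $-n(\pp)$.

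First, Theorem \ref{prop:corner} applied in $N=2$ produces an approximating cone $C$ described by \eqref{eq:C}: it is exactly the sector bounded by the two tangent lines to $\partial\AA$ at $\pp$ and containing the half-line $\{\pp - t n(\pp):t>0\}$, where $n(\pp)$ is defined in \eqref{eq:n_corner}. Since the hypotheses of Theorem \ref{Thm:gen} are verified, we obtain
\begin{equation*}
\nabla_x h(t,\pp) = -\tfrac{2}{\sqrt{t}}\Bigl(\textstyle\int_{C-\pp} z\,\kappa'(\|z\|^2)\,\d z + o(1)\Bigr) \quad\text{as}\quad t\to 0^+.
\end{equation*}
It therefore suffices to show $\int_{C-\pp} z\,\kappa'(\|z\|^2)\,\d z = \tfrac{1}{2}\xi\, n(\pp)$.

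Next I would introduce polar coordinates adapted to $C-\pp$: set $z = r(\cos\theta,\sin\theta)$ in an orthonormal basis with $-n(\pp)$ corresponding to $\theta=0$. Then $C-\pp$ consists of those $z$ with $r>0$ and $\theta\in(-\phi_\pp/2,\phi_\pp/2)$, and
\begin{equation*}
\int_{C-\pp} z\,\kappa'(\|z\|^2)\,\d z = \int_{-\phi_\pp/2}^{\phi_\pp/2}\!\!\int_0^\infty r^2 (\cos\theta,\sin\theta)\,\kappa'(r^2)\,\d r\,\d\theta.
\end{equation*}
The $\sin\theta$-component vanishes by the symmetry $\theta\mapsto -\theta$ of the interval, while the angular integral of $\cos\theta$ is $2\sin(\phi_\pp/2)$. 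For the radial factor, integration by parts with $u=r$ and $\d v = r\kappa'(r^2)\,\d r$ (so $v=\tfrac{1}{2}\kappa(r^2)$) gives
\begin{equation*}
\int_0^\infty r^2 \kappa'(r^2)\,\d r = \Bigl[\tfrac{r}{2}\kappa(r^2)\Bigr]_0^\infty - \tfrac{1}{2}\int_0^\infty \kappa(r^2)\,\d r = -\tfrac{1}{2}\int_0^\infty\kappa(r^2)\,\d r,
\end{equation*}
where the boundary term at infinity vanishes by hypothesis $(\kappa1)$ (together with the standard consequence $r\kappa(r^2)\to 0$ coming from the integrability $\int_0^\infty |\kappa(r^2)|\,r\,\d r<\infty$ of a continuous function, applied along a suitable sequence and then extended via monotonicity/continuity). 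Collecting constants, the integral equals $-\sin(\phi_\pp/2)\int_0^\infty\kappa(r^2)\,\d r$ times the unit vector $(1,0)=-n(\pp)$, which is $\tfrac{1}{2}\xi\,n(\pp)$.

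Substituting back into Theorem \ref{Thm:gen} yields $\nabla_x h(t,\pp) = -t^{-1/2}n(\pp)(\xi+o(1))$, which is \eqref{eq:asymptotic_normal} pointwise at $x=\pp$. The only non-routine point is justifying the vanishing of the boundary term $r\kappa(r^2)|_{r=\infty}$ from the assumptions $(\kappa1)$–$(\kappa2)$; everything else is a short planar computation in polar coordinates combined with the symmetry of the sector about the bisector $-n(\pp)$.
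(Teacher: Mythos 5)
Your computation follows essentially the same route as the paper's own proof of Proposition \ref{Thm:piecewise}: reduce via Theorems \ref{prop:corner} and \ref{Thm:gen} to the integral $\int_{C-\pp} z\kappa'(\|z\|^2)\,\d z$, pass to polar coordinates about the bisector $-n(\pp)$, use the symmetry of the sector to annihilate the transverse component, and evaluate the radial factor by parts, obtaining $\tfrac{1}{2}\xi\,n(\pp)$; substituting into \eqref{eq:normal3} gives \eqref{eq:asymptotic_normal} at $x=\pp$.

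The one point worth tightening is your claim that $r\kappa(r^2)\to 0$ as $r\to\infty$ follows from the integrability in $(\kappa1)$ ``applied along a suitable sequence and extended via monotonicity/continuity'': $\kappa$ is not assumed monotone, and integrability of a continuous function does not by itself imply pointwise decay, so that step is not justified as stated. A correct route uses $(\kappa2)$: with $K=\{0\}$ and $N=2$ it gives $\int_0^\infty t^2|\kappa'(t^2)|\,\d t<\infty$; then, since $\kappa(r^2)\to 0$, one has $\kappa(r^2)=-2\int_r^\infty t\kappa'(t^2)\,\d t$, whence $|r\kappa(r^2)|\le 2\int_r^\infty t^2|\kappa'(t^2)|\,\d t\to 0$. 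The paper's proof performs the same radial integration by parts without remarking on this boundary term, so you are being more careful than the paper in flagging the issue, but the justification you propose for it should be replaced.
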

 \begin{remark}
Due to the factor $\sin \frac{\phi_\pp}{2}$ the length of $\nabla_x h(t,\pp)$ at a corner $\pp$ is shorter than at smooth points unless $\phi_p = \pi$, i.e., unless $\partial \AA$ is a $C^{1,\alpha}$-surface at $\pp$.  
 \end{remark}
\begin{proof}[Proof of Proposition \ref{Thm:piecewise}] 
To compute the integral $\int_{C-\pp} z \kappa'(\|z  \|^2) \,\d z$, let $z= - u n(\pp) + v n_{\perp}(\pp)$ be the change of variables to $(u,v)$, where $n_\perp(\pp)$ is a unit vector orthogonal to $n(\pp)$. Let $\widehat C=\{(u,v)\in \R^2\setminus\{0\}: \arg (u+i v) \in (-\phi_\pp/2,\phi_\pp/2)\}$ which is nothing but the shifted cone $C - \pp$ in the $(u,v)$-coordinate. By the symmetry of $\widehat C$ with respect to the $u$-axis and oddness of functions, 
\begin{eqnarray*}
\int_{\substack{\|z\|\leq R\\ z\in C -\pp}} z \kappa'(\|z\|^2) \,\d z
&=& \int_{\substack{\|(u,v)\|\leq R\\ (u,v)\in \widehat C}} (-u n(\pp) + v n_{\perp}(\pp)) \kappa'(u^2+v^2) \,\d u \d v \\
&=& -n(\pp) \int_{\substack{\|(u,v)\|\leq R\\ (u,v)\in \widehat C}} u \kappa'(u^2+v^2) \,\d u \d v \\
&=& -n(\pp) \int_{\widehat C} u \kappa'(u^2+v^2) \,\d u \d v +\widetilde \ep_3(t),  
\end{eqnarray*}
where $\widetilde \ep_3(t)$ satisfies the same bound \eqref{eq:E3} used for $\ep_3(t)$. 
To finish the calculations, going to the polar coordinate amounts to 
\begin{align}
-\int_{ \widehat C} u \kappa'(u^2+v^2) \,\d u \d v \notag
&= -\int_{0}^\infty \,\d r \int_{-\phi_\pp/2}^{\phi_\pp/2}\,\d\theta \cos \theta \kappa'(r^2) r^2 \notag \\ 
&= \sin \frac{\phi_\pp}{2} \int_0^\infty \kappa(r^2)\,\d r.     \tag*{\qedhere}
\end{align}
%The case of a cusp point $\pp$ requires a single modification of the proof above. In case $\phi_\pp=0$, the volume $|B|$ is  of order $o(1)$ and so $B_{\phi_\pp}'$ can be replaced with the empty set; then the left hand side of \eqref{eq:cone} itself is of order $o(1)$.  In case $\phi_\pp=2\pi$, one can take $H_{\phi_\pp}:=\R^N$ and then the integral in the right hand side of \eqref{eq:cone} vanishes by symmetry. 
\end{proof}

%\begin{remark} 
%The form \eqref{eq:A} does not exhaust all types of $C^{1,\alpha}$-corners in two dimensions.  An exceptional case is when singular points accumulate; for instance $p=(0,0)$ and $\AA = \{(x,y)\in\R^2: g_1(x,y)>0, g_2(x,y)>0\}$, where $g_i(x,y) = y- f_i(x)$ and $f_1=0$,  $f_2(x) = \sin(1/|x|) \exp(-1/|x|)$ $(x\ne0)$ and $f_2(0)=0$. 
%\end{remark}

\begin{note}
The results of Section \ref{sec3} are improved from \cite{M1}: Theorem \ref{Thm:normal} and Proposition \ref{Thm:piecewise} treated $C^{1,\alpha}$-surfaces and corners while the previous paper \cite{M1} focused on more restrictive $C^2$ cases; Theorems \ref{Thm:gen} and \ref{prop:corner} are new results that are not included in \cite{M1} and that can be applied to singular points in any dimensions. 
\end{note}

%%%%%%%%%%%%%%%%  Section 4  %%%%%%%%%%%%%%%%%%%%%%%%%%%%%%%%%%%%%%%%%%%%%%%%%%%%%%%%%%%%%%%%%%%%%%%%%%%%%%%%%%%%%%%%%%%%%%%%%%%%%%%%%%%%%%%%%%%%%%%%%%%%%%%%%%%%%
\section{Distance function}
\label{sec4}

Varadhan gave a method to extract the distance function from an elliptic equation with small diffusion coefficient~\cite{V67}.   
He actually discussed the case of variable coefficients but here we focus on the simplest case of Laplacian. 
Let $\AA \subseteq \R^N$ be a bounded open subset with suitable regularity, $a>0$ a constant and $q_a$ be a unique solution to the equation 
\begin{equation}\label{eq:Varadhan}
   \begin{cases}
-a\Delta q_a+q_a = 0 &\text{in}\quad \AA,\\
q_a=1&\text{on}\quad \partial \AA. 
\end{cases}
\end{equation}
Varadhan proved 
\begin{equation*}
-\sqrt{a} \log q_a(x) \to d(x, \partial \AA) \quad \text{uniformly for} \quad x \in \AA.
\end{equation*}
We generalized Varadhan's equation to \eqref{eq:sd0} in \cite{M}. 

%The construction of distance functions using PDEs is broadly useful in topology optimization.
%This is because sensitivity analysis can be performed with respect to geometric features such as distance functions, enabling mechanical characteristics to be used as objective or constraint functions. 

%%%%%%%%%%%%%%%%%%%%%%%%%%%%%%%%%%%%%%%%%%%%%%%%5
\subsection{Results}

Throughout Section \ref{sec4},  the following are assumed. 

\vspace{4mm} 
\noindent
{\bf Assumption.}   
\begin{enumerate}
   \item $\Omega$ is a bounded $C^{1,1}$-domain of $\R^N$. 
  \item  $f\colon \Omega \to \R$ is a measurable bounded nonnegative function and the set $[f>0]:=\{x\in \Omega\colon f(x)>0\}$ is nonempty and open in $\Omega$.

    \item\label{item3} The set  
$\AA:=\Omega\setminus \overline{[f>0]}$  is nonempty and satisfies $d(\AA, \partial\Omega)>0$, where $d(\AA,\partial \Omega):=\inf_{x\in \AA, y  \in \partial \Omega}|x-y|$, and $\AA$ consists of finitely many connected components of $C^{1,1}$-boundary. See also Fig.~\ref{fig:concept1}. 

\begin{figure}[t]
\sidecaption[t]
       \includegraphics[scale = 0.28, bb=100 0 857 494]{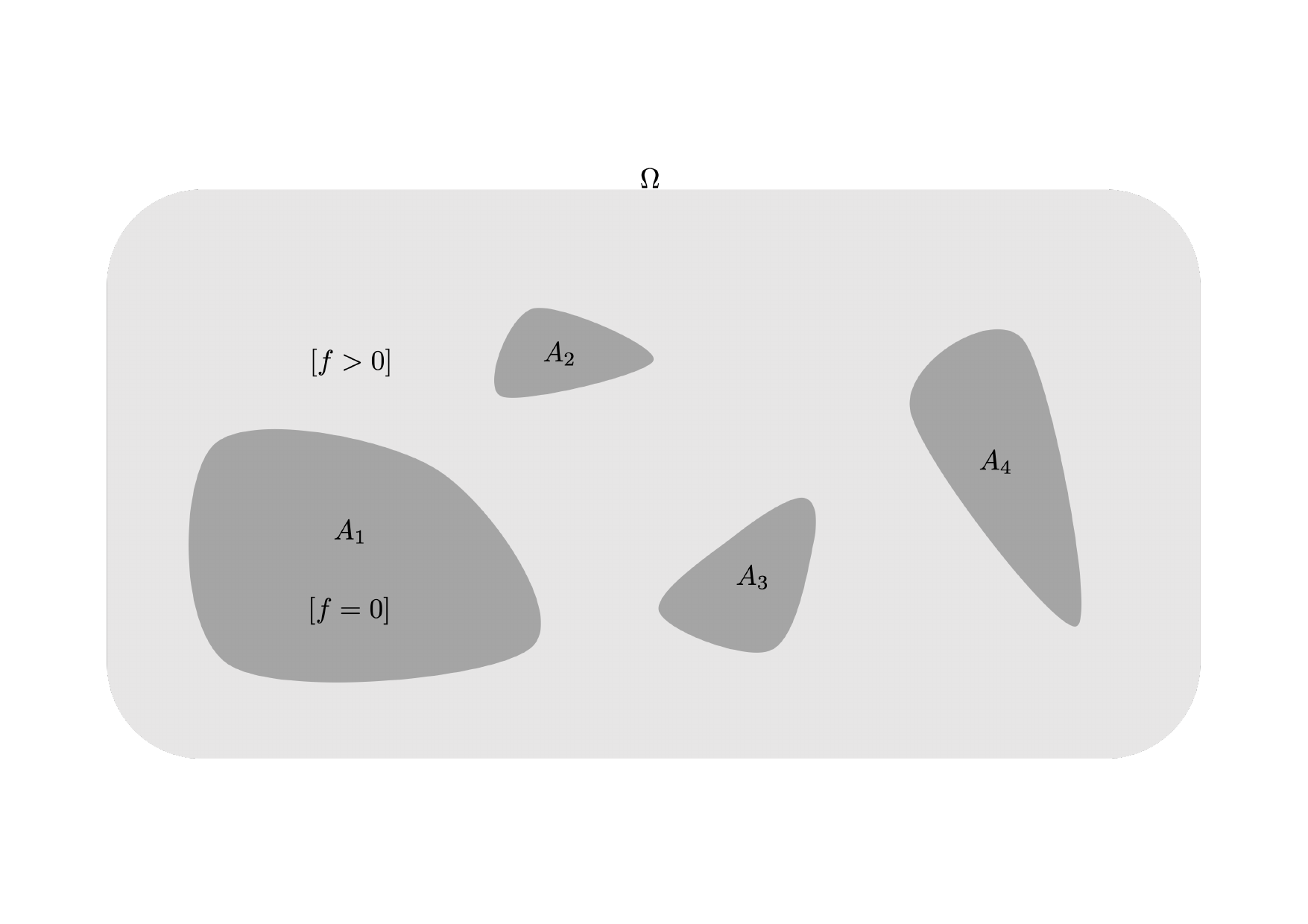} 
   \caption{A typical configuration of $\Omega$ and $\AA$ consisting of four connected components $\AA_1, \AA_2, \AA_3, \AA_4$.  }
\label{fig:concept1}
\vspace*{-10mm}
\end{figure}

    \item $g$ is continuous and nonnegative on $\partial\Omega$, and  moreover, there exists 
$\tilde{g}\in H^1(\Omega)\cap C(\overline{\Omega})$ such that $\tilde g |_{\partial \Omega} = g$.
\end{enumerate}

A weak solution to Equation \eqref{eq:sd0} can be formulated as a function $u\in H^1(\Omega)$ such that $u-\tilde{g}\in H^1_0(\Omega)$ and  
\begin{equation}\label{eq:sd_weak}
a \int_\Omega (\nabla u, \nabla \varphi)\, \d x+ \int_{\Omega} u\varphi\, \d x=\int_\Omega f\varphi\, \d x, \qquad \varphi\in H^1_0(\Omega).
\end{equation}
By Stampacchia's theorem (see, e.g.,~\cite[Theorem 5.6]{B11}) such a weak solution $u=u_a$ exists and is unique. 
Moreover, $u_a\in C(\overline{\Omega})$ (see  \cite[Corollary 9.18]{GT}) and $u_a$ is positive on $\Omega$ (see \eqref{eq:minmax}). 

\begin{theorem}[\cite{M}]\label{T:main}
It holds that, as $a\to0^+$, 
$$
-\sqrt{a}\log u_a(x) \to d (x, \partial \AA)
\quad \text{ uniformly on } \AA.   
$$   
\end{theorem}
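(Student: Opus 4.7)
The plan is to sandwich $u_a(x)$ between two exponential barriers of the form $c\,e^{-\phi/\sqrt{a}}$, with $\phi$ a smoothed distance function, and then take $-\sqrt{a}\log$, in the spirit of Varadhan's 1967 argument for \eqref{eq:Varadhan}. The main tool throughout is the weak comparison principle for the operator $-a\Delta+1$.

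First I would record a priori bounds. By the weak maximum principle applied to \eqref{eq:sd0}, $0\le u_a\le M^{*}:=\max(\|f\|_\infty,\max_{\partial\Omega}g)$ on $\overline{\Omega}$, and $u_a>0$ in $\Omega$ by the strong maximum principle. Moreover, by comparing $u_a$ to a small constant subsolution inside a ball $B\subset[f>0]$ where $f\ge c_0>0$, one obtains $u_a\ge c_0/2$ on any prescribed compact $K\subset[f>0]$ for all $a$ small enough.

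For the lower bound on $-\sqrt{a}\log u_a$ (equivalently an upper bound on $u_a$), fix $\varepsilon\in(0,1)$ and let $\psi_\varepsilon\in C^2(\overline{\AA})$ be a nonnegative smoothing of $(1-\varepsilon)\,d(\cdot,\partial\AA)$ satisfying $\psi_\varepsilon|_{\partial\AA}=0$, $\psi_\varepsilon\ge d(\cdot,\partial\AA)-\varepsilon$, $|\nabla\psi_\varepsilon|\le 1-\varepsilon$, and $\|\Delta\psi_\varepsilon\|_\infty<\infty$; the $C^{1,1}$-regularity of $\partial\AA$ makes this possible since $d(\cdot,\partial\AA)$ is then $C^{1,1}$ in a tubular neighborhood of $\partial\AA$. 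Setting $\overline v_a:=M^{*}e^{-\psi_\varepsilon/\sqrt{a}}$ yields
\[
-a\Delta\overline v_a+\overline v_a=\overline v_a\bigl(1-|\nabla\psi_\varepsilon|^2+\sqrt{a}\,\Delta\psi_\varepsilon\bigr)\ge 0=f\quad\text{on }\AA
\]
for $a$ small, together with $\overline v_a=M^{*}\ge u_a$ on $\partial\AA$. Comparison gives $u_a\le\overline v_a$ on $\overline{\AA}$, hence $-\sqrt{a}\log u_a(x)\ge d(x,\partial\AA)-\varepsilon-\sqrt{a}\log M^{*}$ uniformly; letting $a\to 0^+$ then $\varepsilon\to 0^+$ closes this half.

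For the matching upper bound on $-\sqrt{a}\log u_a$, choose $K\subset[f>0]$ compact with $d(K,\partial\AA)<\varepsilon$ and $f\ge c_0>0$ near $K$ (possible since $\partial\AA\subset\overline{[f>0]}$ and $d(\AA,\partial\Omega)>0$), together with a smooth $\phi_\varepsilon$ on $\overline{\Omega}\setminus K$ satisfying $\phi_\varepsilon|_{\partial K}=0$, $|\nabla\phi_\varepsilon|\ge 1+\varepsilon$, $\|\Delta\phi_\varepsilon\|_\infty<\infty$, and $\phi_\varepsilon\le d(\cdot,\partial\AA)+2\varepsilon$ on $\overline{\AA}$, obtained by smoothing $(1+\varepsilon)\,d(\cdot,K)$. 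Then $\underline v_a:=(c_0/2)e^{-\phi_\varepsilon/\sqrt{a}}$ satisfies $-a\Delta\underline v_a+\underline v_a\le 0\le f$ on $\Omega\setminus K$ for $a$ small, with $\underline v_a\le u_a$ on $\partial K$; on $\partial\Omega$ it decays exponentially since $d(K,\partial\Omega)\ge d(\AA,\partial\Omega)-\varepsilon>0$, and the discrepancy with $g\ge 0$ there is absorbed by a boundary corrector whose value at points of $\overline{\AA}$ is exponentially smaller than $e^{-d(\cdot,\partial\AA)/\sqrt{a}}$. Comparison gives $u_a\ge\underline v_a-\text{(negligible)}$ on $\overline{\AA}$, hence $-\sqrt{a}\log u_a(x)\le d(x,\partial\AA)+2\varepsilon+o(1)$; letting $a\to 0^+$ then $\varepsilon\to 0^+$ closes the other half. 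The principal technical obstacle is producing $\psi_\varepsilon$ and $\phi_\varepsilon$ with the simultaneous strict gradient bounds ($<1$ for $\psi_\varepsilon$ and $>1$ for $\phi_\varepsilon$) and bounded Laplacian, for which the $C^{1,1}$-regularity of $\partial\AA$ is essential; a secondary bookkeeping issue is the $\partial\Omega$ boundary condition in the lower-bound step, handled via $d(\AA,\partial\Omega)>0$.
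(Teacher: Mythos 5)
Your plan and the paper's both rest on comparison for the operator $-a\Delta+1$, but the barrier functions and the crucial supporting lemmas differ. Your supersolution step (upper bound on $u_a$ on $\AA$ via $\overline v_a=M^*e^{-\psi_\varepsilon/\sqrt{a}}$) is a sound Varadhan/Hopf-type argument and is close in spirit to the paper's step 2, which instead uses radial solutions of $-a\Delta w+w=0$ on balls contained in $\AA$. That half is fine once one fixes the cosmetic inconsistency in the listed properties of $\psi_\varepsilon$ (the conditions $\psi_\varepsilon|_{\partial\AA}=0$, $|\nabla\psi_\varepsilon|\le1-\varepsilon$, and $\psi_\varepsilon\ge d-\varepsilon$ cannot hold simultaneously unless $\operatorname{diam}\AA\le1$; you want $\psi_\varepsilon\ge(1-\varepsilon)d-C\varepsilon$ or similar after smoothing the signed distance, using the $C^{1,1}$ regularity of $\partial\AA$ only near the boundary).

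The subsolution half has two genuine gaps, and together they are precisely the difficulties the paper's steps 3--6 are designed to overcome. First, your argument needs a pointwise bound $u_a\ge c_0/2$ on a compact $K\subset[f>0]$ that wraps around $\partial\AA$ uniformly (so that $\phi_\varepsilon\le d(\cdot,\partial\AA)+2\varepsilon$ on $\overline\AA$). This forces you to assume $f\ge c_0>0$ a.e.\ on a full annular neighborhood of $\partial\AA$. The paper assumes only that $f$ is measurable, bounded, nonnegative, and $[f>0]$ is open; under those hypotheses $\operatorname{essinf}_{B}f$ can be $0$ on every ball $B\subset[f>0]$, so no such constant subsolution exists. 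The paper circumvents this by working with the infimum boundary value $\beta_a:=\inf_{\partial\AA}u_a$ and proving, via a mean-value identity on balls $B_\varepsilon(y_a)$ and a test-function argument in the weak form \eqref{eq:sd_weak}, that $\vmean_\varepsilon^{y}(u_a)\to\vmean_\varepsilon^{y}(f)$ uniformly in $y\in\partial\AA$; the right-hand side is positive because $[f>0]$ is open, and continuity plus compactness of $\partial\AA$ gives a uniform lower bound. This use of averages rather than pointwise lower bounds is the central new idea of \cite{M} relative to Varadhan's original proof, and your plan does not contain a substitute for it.

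Second, the corrector for the $\partial\Omega$ boundary is not merely "bookkeeping." Killing $\underline v_a$ on $\partial\Omega$ with a supersolution concentrated near $\partial\Omega$ produces a term whose size at $x\in\overline\AA$ is on the order of $e^{-(d(K,\partial\Omega)+d(x,\partial\Omega))/\sqrt{a}}$, and for this to be dominated by the target $e^{-d(x,\partial\AA)/\sqrt{a}}$ you need $d(K,\partial\Omega)+d(x,\partial\Omega)>d(x,\partial\AA)$ for all $x\in\overline\AA$. That inequality can fail when the inradius of $\AA$ exceeds $d(\AA,\partial\Omega)$, which is permitted by the assumptions. The paper sidesteps this entirely: in its step 3 the comparison is performed only on $\AA$ (whose boundary is $\partial\AA$, not $\partial\Omega$), using the explicit decaying solution $z$ of $a\Delta z=z$ on $\R^N\setminus B_{\sqrt a}(\tilde y)$ with boundary value $\beta_a$. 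This is possible exactly because they have already reduced the matter to estimating $\beta_a$. To make your barrier approach rigorous you would need to either prove an analogue of the mean-value estimate for $\beta_a$, or restrict to $f$ that is bounded below near $\partial\AA$ and additionally handle the $\partial\Omega$ interaction with a geometry-aware cutoff.
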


A convergence rate can be obtained with a moderate additional assumption on the volume mean of $f$. Let $B_\ep(y)$ be the open ball of radius $\ep>0$ centered at $y \in \R^N$ and $\vmean_\ep^y(h)$ be the volume mean of a function $h$
$$
\vmean_\ep^y(h)
:=
\frac{1}{| B_\ep(y)|}
\int_{B_\ep(y)} h(x)\, \d x.
$$

\begin{theorem}[Rate of convergence \cite{M}]\label{T:rate}
Assume that there exists $0\le \zeta<\infty$ such that
\begin{equation} \label{eq:mean_f}
0<  \inf_{\substack{0< \ep <1\\ y \in \partial \AA}}\ep^{-\zeta k} \vmean_\ep^y(f^k) \le  \sup_{\substack{0< \ep <1\\ y \in \partial \AA}}\ep^{-\zeta k} \vmean_\ep^y(f^k) <\infty, \qquad  k\in\{1,2\}. 
\end{equation}
 Then for every $0<\tau <1/2$ there exists a constant $C= C(\Omega,\AA,f,g,\tau)>0$ such that
\begin{equation*}%\label{eq:rate}
\sup_{x\in \overline{\AA}} \left| -\sqrt{a} \log u_a(x) - d(x, \partial \AA)\right| \le Ca^{\frac1{2}-\tau} \quad \text{for all} \quad 0<a<\frac1{2}.  
\end{equation*}
Moreover, if $\zeta=0$ then the bound $C a^{\frac1{2}-\tau}$ can be improved to $\tilde C \sqrt{a} \log \frac{1}{a}$ for some $\tilde C=\tilde C(\Omega, \AA, f,g)>0$. The rate $\sqrt{a}\log(1/a)$ is optimal  at least in dimension one in the sense that there is an example of $\Omega$, $f$ and $g$ that precisely give this rate. 
\end{theorem}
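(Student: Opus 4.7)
The strategy is to sandwich $u_a$ between exponential barriers of the form $c_\pm \exp(-(d(x,\partial\AA)\mp\eta_\pm)/\sqrt{a})$ and to convert these bounds into two-sided estimates on $-\sqrt{a}\log u_a$ via Stampacchia's comparison principle. The error terms $\eta_\pm$ together with the boundary prefactors $c_\pm$ will dictate the rate. It is natural to use the Hopf--Cole reduction $v_a:=-\sqrt{a}\log u_a$, so that \eqref{eq:sd0} becomes $\sqrt{a}\,\Delta v_a - |\nabla v_a|^2 + 1 = f/u_a$; on the set $\AA$ where $f=0$, the limiting equation is the eikonal equation $|\nabla v|=1$, consistent with $v=d(\cdot,\partial\AA)$, while off $\AA$ the source $f$ pins down the boundary value of $v_a$ on $\partial\AA$.

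The first task is to construct sub- and supersolutions of $-a\Delta u + u = 0$ on $\AA$ that behave like $e^{-d(x,\partial\AA)/\sqrt{a}}$. Since $\partial\AA$ is $C^{1,1}$, the distance $d(\cdot,\partial\AA)$ is itself $C^{1,1}$ in a one-sided tubular neighborhood $\AA_{r_0}:=\{x\in\AA:d(x,\partial\AA)<r_0\}$, with $\Delta d$ bounded in terms of the principal curvatures of $\partial\AA$. Inside this tube I would take an ansatz $\psi_\pm(x) = c_\pm\exp(-\phi_\pm(x)/\sqrt{a})$ with $\phi_\pm$ a small $C^2$ perturbation of $d$ (for instance of the form $d \pm \mu_\pm d^2$) and verify, by the direct computation $-a\Delta\psi_\pm + \psi_\pm = \psi_\pm(\sqrt{a}\,\Delta\phi_\pm + 1 - |\nabla\phi_\pm|^2)$, that suitable choices $\mu_\pm=O(\sqrt{a})$ absorb the $\sqrt{a}\,\Delta d$ term with the correct sign, yielding a subsolution and a supersolution respectively. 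Outside $\AA_{r_0}$, the region stays at a fixed positive distance from $\partial\AA$, so the contribution of $u_a$ there is already exponentially small in $1/\sqrt{a}$ and a crude bound suffices after gluing.

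The barriers must match $u_a$ on $\partial\AA$, and this is where the hypothesis \eqref{eq:mean_f} enters. For $y\in\partial\AA$ and a scale $\varepsilon=\varepsilon_a$ to be chosen, I would build local sub- and supersolutions of \eqref{eq:sd0} in balls $B_\varepsilon(y)\subset\Omega$ using explicit radial (modified-Bessel) solutions of $-a\Delta w + w = F$ with $F$ a constant proxy for the local mean. The lower bound uses $k=1$ in \eqref{eq:mean_f}: testing \eqref{eq:sd_weak} with a positive localized bump yields $u_a(y)\gtrsim \vmean_\varepsilon^y(f)\cdot G_a(\varepsilon)$ where $G_a$ is the elementary kernel of the modified Helmholtz operator on $B_\varepsilon$. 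The upper bound uses $k=2$ via a Cauchy--Schwarz/energy argument on the same weak form. Setting $\varepsilon_a$ of order $\sqrt{a}$ and invoking the two-sided control $\vmean_\varepsilon^y(f^k)\asymp\varepsilon^{\zeta k}$ yields $a^{\zeta/2+o(1)}\lesssim u_a(y)\lesssim a^{\zeta/2-o(1)}$ uniformly for $y\in\partial\AA$, hence $|\log c_\pm|=O(\zeta|\log a|+1)$. Feeding this into the global comparison from the previous step gives $\sup_{\overline\AA}|{-\sqrt{a}\log u_a - d(\cdot,\partial\AA)}|=O(\sqrt{a}\,|\log a|+\eta_\pm)$; choosing $\eta_\pm=a^{1/2-\tau}$ (so that the tube-construction error from $\mu_\pm=O(\sqrt{a})$ is absorbed) delivers the stated bound $Ca^{1/2-\tau}$, and when $\zeta=0$ no polynomial loss appears so one may take $\varepsilon_a=\sqrt{a}\log(1/a)$ to reach the sharp $\tilde C\sqrt{a}\log(1/a)$.

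The hardest step, I expect, is the boundary estimate of the third paragraph: turning the purely integral assumption \eqref{eq:mean_f} into sharp pointwise control of $u_a|_{\partial\AA}$ at scale $\sqrt{a}$, uniformly over $y$, and with constants that do not pick up any stray polynomial factor in $a$ when $\zeta=0$. Both the $k=1$ moment (for the lower bound) and the $k=2$ moment (to control a Cauchy--Schwarz remainder in the upper bound) are genuinely needed, and the radial test functions must be chosen to remain legitimate on balls $B_\varepsilon(y)$ that straddle $\partial\AA$. Once this local analysis is in hand, the global comparison and the balance of error terms are fairly mechanical.
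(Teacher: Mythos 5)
Your strategy differs substantially from the paper's for the global comparison step, and has gaps in both the barrier ansatz and the boundary estimate.

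For the global comparison, the paper does not perturb the distance function at all: it follows Varadhan's method of choosing, for each $y\in\AA$, an explicit radial comparison function (a ratio of Bessel-type integrals, formula \eqref{eq:w}) on a ball inside $\AA$ for the lower bound, and a decaying radial solution outside a small ball $B_{\sqrt{a}}(\tilde y)\subset\Omega\setminus\AA$ tangent to $\partial\AA$ for the upper bound. These arguments need only the interior-ball and exterior-ball conditions (hence $C^{1,1}$), never the regularity of $d(\cdot,\partial\AA)$ as a function, and never require gluing a barrier across the tube $\AA_{r_0}$. Your barrier approach is closer to the viscosity-solution literature the paper cites (Fleming--Souganidis, Tran), but the specific ansatz $\phi_\pm=d\pm\mu_\pm d^2$ with $\mu_\pm=O(\sqrt{a})$ does not work: computing $\sqrt{a}\,\Delta\phi_\pm+1-|\nabla\phi_\pm|^2$ shows the $\sqrt{a}\,\Delta d$ term at $d\approx 0$ cannot be dominated by a correction that vanishes to second order in $d$; you need a multiplicative perturbation $\phi_\pm=(1\mp\mu)d$ so that the $O(\mu)$ term in $1-|\nabla\phi_\pm|^2$ is available at $d=0$. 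Even then, the gluing of the supersolution across the inner edge of the tube and the definition of a subsolution beyond $\AA_{r_0}$ (where $\Delta d$ is only a measure) are genuine technical hurdles that are absent from the paper's ball-based argument.

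For the boundary estimate, you correctly identify this as the hard part, but your sketch omits the mechanism that converts integral information into pointwise control of $u_a$ on $\partial\AA$: testing the weak form against a bump gives control of the volume mean $\vmean_\epsilon^y(u_a)$, not of $u_a(y)$. The paper bridges this gap with Kuznetsov's mean-value formula for the modified Helmholtz operator (step 4), producing the key inequality $\sqrt{a}\log(1/\beta_a)\le\epsilon-\sqrt{a}\log\vmean_\epsilon^{y_a}(u_a)$ with $\beta_a=\inf_{\partial\AA}u_a$. Moreover, the paper needs only a \emph{lower} bound on $u_a|_{\partial\AA}$ (the trivial upper bound $u_a\le M$ suffices on the other side); the two-sided $a^{\zeta/2\pm o(1)}$ estimate you aim for is more than is required. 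Finally, the role of the $k=2$ moment in \eqref{eq:mean_f} is not to furnish an upper bound on $u_a(y)$, but to establish, via an iterative local energy argument, the sharp $L^2$ bound $\|u_a\|_{L^2(B_\epsilon(y))}^2+a\|\nabla u_a\|_{L^2(B_\epsilon(y))}^2\le C\epsilon^{N+2\zeta}$, which controls the Cauchy--Schwarz remainder in the weak-form comparison of $\vmean_\epsilon^y(u_a)$ with $\vmean_\epsilon^y(f)$. Without this sharpened $L^2$ estimate the error in \eqref{eq:f} would swamp $\vmean_\epsilon^y(f)\asymp\epsilon^\zeta$ when $\zeta>0$, and the stated rate would not follow.
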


Condition \eqref{eq:mean_f} means that the function $f$ should not be ``too smooth'' near $\partial \AA$. The parameter $\zeta$ stands for the smoothness.  For example, the characteristic function $f=\chi_{\Omega \setminus \overline{\AA}}$ satisfies \eqref{eq:mean_f} with $\zeta=0$ and the function $f(x)=(\max\{0, \|x\|^2-r^2 \})^\zeta$ satisfies \eqref{eq:mean_f} for any parameters $r,\zeta>0$ as long as $\overline{\AA} = \overline{B_r(0)}$ is contained in $\Omega$.

\begin{remark} Equation \eqref{eq:sd_weak} includes Equation \eqref{eq:Varadhan} as the special case $f\equiv 0$ and $g\equiv1$. To be precise, this case is excluded due to Assumption \ref{item3}; however, our proof can be easily fixed to treat this case and the proof becomes much simpler. We can actually thus show, under the assumption that $\AA$ consists of finitely many connected components of $C^{1,1}$-boundary, that 
\begin{equation*}%\label{eq:rate}
\sup_{x\in \overline{\AA}} \left| -\sqrt{a} \log q_a(x) - d(x, \partial \AA)\right| \le C\sqrt{a}\log\frac1{a} \quad \text{for all} \quad 0<a<\frac1{2}  
\end{equation*}
for some constant $C=C(\AA)>0$. 
\end{remark}

\begin{remark}
Similar problems are discussed in the context of the method of vanishing viscosity for viscosity solutions to Hamilton--Jacobi equations. 
First, the Cole-Hopf transform $p_a(x) = -\sqrt{a} \log u_a(x)$ transforms our equation \eqref{eq:sd0} to  
\begin{equation*}
\begin{cases}
-\sqrt{a}\Delta p_a+|\nabla p_a|^2-1 =-f\exp(\frac{p_a}{\sqrt{a}})& \text{in}\quad \Omega, \\
p_a = -\sqrt{a}\log g & \text{on} \quad  \partial \Omega. 
\end{cases}
\end{equation*}
Recalling that $f=0$ in $A$, we have   
 \begin{equation}\label{eq:eikonal}
-\sqrt{a}\Delta p_a+|\nabla p_a|^2-1 =0 \quad \text{in} \quad A. 
\end{equation}
Equations that generalize \eqref{eq:eikonal} are analyzed in \cite{FS86} and \cite{Tra11} with zero boundary condition on $\partial A$. More precisely,  an asymptotic expansion as $a\to0^+$ was obtained in \cite[Theorem 5.1]{FS86} and  the rate of convergence $a^{\frac1{4}}$ was obtained in \cite[Theorem 3.3]{Tra11}. In our problem, however, the  values of $p_a$ on $\partial A$ are unknown and the major part of our proof is devoted to the estimate of these boundary values. Consequently we obtained the better rate of convergence $a^{\frac{1}{2}-\tau}$ or $\sqrt{a}\log(1/a)$. 
We also mention that in \cite[Proposition 6.1]{L} (see also the remark after the proposition) the rate of convergence $a^{\frac1{4}}$ has also been obtained when the domain is the whole space $\R^N$.
\end{remark}

If $f$ is a characteristic function, then we can also analyze $u_a$ on $\Omega \setminus \AA$ and extract the signed distance function from the PDE. The proof is just a slight modification of Theorem \ref{T:rate}. 

\begin{theorem}[\cite{M}]
We select $f:= C^* \chi_{\Omega \setminus \overline{\AA}}$ with any constant $C^* \ge \sup_{x\in \partial\Omega}g(x)$ and define  
 $U_a\in H^1(\Omega)$ as in \eqref{eq:sdist}.  Then there exists $C= C(\Omega,\AA,C^*,g)>0$ such that 
\begin{equation*}\label{eq:rate_signed_distance}
\sup_{x \in \Omega^*} |U_a(x) - \hat{d}(x,\partial \AA)| \le C \sqrt{a} \log \frac1{a} \quad\text{for all} \quad 0< a< \frac{1}{2}, 
\end{equation*}
where $\hat{d}$ is the signed distance function 
\begin{equation*}
\hat{d}(x,\partial \AA) := 
\begin{cases}
-d(x,\partial \AA), & x \in \overline{\AA}, \\
d(x, \partial \AA), & x \in \Omega \setminus \overline{\AA} 
\end{cases}
\end{equation*}
and $\Omega^\ast:=\{ x\in \Omega\colon d(x,\partial \AA)\le d(x,\partial\Omega)\}$. 
\end{theorem}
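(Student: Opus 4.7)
The plan is to reduce the claim to two applications of Theorem~\ref{T:rate}: the original one on $\overline{\AA}$, and a ``mirror'' one on $\Omega\setminus\overline{\AA}$ obtained via the substitution $v_a := C^\ast - u_a$.

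\emph{Step 1 (inside $\AA$).} Since $f = C^\ast\chi_{\Omega\setminus\overline{\AA}}$ is a scalar multiple of an indicator, condition \eqref{eq:mean_f} holds with $\zeta=0$. Theorem~\ref{T:rate} therefore supplies $C_1>0$ such that
$$
\sup_{x\in\overline{\AA}}\bigl|{-\sqrt{a}}\log u_a(x)-d(x,\partial\AA)\bigr|\le C_1\sqrt{a}\log\tfrac{1}{a}.
$$
Because $U_a = \sqrt{a}\log u_a$ on $\overline{\AA}$ and $\hat d(\cdot,\partial\AA)=-d(\cdot,\partial\AA)$ there, and because $\overline{\AA}\subset\Omega^\ast$ (any point of $\overline{\AA}$ is at least as close to $\partial\AA$ as to $\partial\Omega$), this immediately delivers the bound on $\overline{\AA}$.

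\emph{Step 2 (mirror substitution).} Set $v_a := C^\ast - u_a$. The weak maximum principle applied to \eqref{eq:sd0} gives $u_a\le C^\ast$ on $\overline{\Omega}$, and the strong maximum principle then yields $v_a>0$ in $\Omega$. A direct calculation from \eqref{eq:sd0} produces
$$
\begin{cases}
-a\Delta v_a+v_a = C^\ast-f = C^\ast\chi_{\AA} & \text{in }\Omega,\\
v_a = C^\ast-g\ge 0 & \text{on }\partial\Omega.
\end{cases}
$$
Thus $v_a$ solves the same type of equation as $u_a$, with the roles of $\AA$ and $\Omega\setminus\overline{\AA}$ interchanged, the right-hand side still a multiple of an indicator (so $\zeta=0$), and a continuous nonnegative boundary datum.

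\emph{Step 3 (outside $\AA$).} The natural instinct is to apply Theorem~\ref{T:rate} to $v_a$ directly, reading $\Omega\setminus\overline{\AA}$ as the new interior set. The main obstacle is that this new set is \emph{not} compactly contained in $\Omega$: it meets $\partial\Omega$, so the separation hypothesis $d(\AA,\partial\Omega)>0$ fails and the theorem cannot be invoked as a black box. Restricting to $\Omega^\ast$ neutralizes this obstacle: for $x\in\Omega^\ast$ one has $d(x,\partial\AA)\le d(x,\partial\Omega)$, so the distance from $x$ to the topological boundary of $\Omega\setminus\overline{\AA}$ equals $d(x,\partial\AA)$, and the sub- and supersolutions built in the proof of Theorem~\ref{T:rate} from the smooth distance $d(\cdot,\partial\AA)$ in a tubular neighborhood of the $C^{1,1}$-surface $\partial\AA$ remain valid barriers on $\Omega^\ast\setminus\overline{\AA}$. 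Replaying that argument for $v_a$ with these barriers produces $C_2>0$ with
$$
\sup_{x\in\Omega^\ast\setminus\overline{\AA}}\bigl|{-\sqrt{a}}\log v_a(x)-d(x,\partial\AA)\bigr|\le C_2\sqrt{a}\log\tfrac{1}{a},
$$
which, in view of $U_a=-\sqrt{a}\log v_a$ and $\hat d(\cdot,\partial\AA)=d(\cdot,\partial\AA)$ on $\Omega\setminus\overline{\AA}$, combines with Step~1 to give the theorem.

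\emph{Where the difficulty concentrates.} The whole proof essentially sits in Step~3. The bookkeeping in Steps~1--2 is immediate, but verifying that the Varadhan-type barrier constructions of Theorem~\ref{T:rate} survive the mirror substitution requires that all inequalities used in that proof are localized to a neighborhood of the controlling surface $\partial\AA$ and do not rely on the global separation $d(\AA,\partial\Omega)>0$. The geometric identity $d(x,\partial\AA)=d(x,\partial(\Omega\setminus\overline{\AA}))$ for $x\in\Omega^\ast$ is precisely what enables this localization and secures the optimal rate $\sqrt{a}\log(1/a)$.
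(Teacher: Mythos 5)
Your proposal matches the approach the paper alludes to (and the one implicit in the very definition of $U_a$): rewrite the equation for $v_a:=C^\ast-u_a$, observe it has the same structure with $\AA$ and $\Omega\setminus\overline{\AA}$ swapped and source $C^\ast\chi_{\AA}$, and re-run the Theorem~\ref{T:rate} barrier and $\beta_a$-estimates for $v_a$, restricting to $\Omega^\ast$ precisely so that the comparison balls $B_{d(y,\partial\AA)}(y)$ stay inside $\Omega$ and the effective boundary is $\partial\AA$ rather than $\partial\Omega$. You correctly identify that $\zeta=0$ gives the $\sqrt{a}\log(1/a)$ rate, that $\overline{\AA}\subset\Omega^\ast$, and that the genuine work is in verifying the localization near $\partial\AA$; this is exactly the ``slight modification'' the paper has in mind, so the proposal is essentially the paper's proof.
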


%%%%%%%%%%%%%%%%%%%%%%%%%%%%%%%%%%%%%%%%%%%%%%%%%%%
\subsection{Sketch of proofs}

Because the original proof is lengthy, the proof is sketched below so that the idea can be grasped. The reader is referred to the original article \cite{M} for further details. 

\vspace{3mm}
\noindent
\textbf{[1. Positivity and uniform bound for $u_a$]\hspace{2mm}} It is known that
\begin{align}\label{eq:minmax}
0<u_a(x)\le M\quad \text{ for all } x\in \Omega, 
\end{align}
where $M:=\max\left\{\sup_{x\in \Omega} f(x),  \sup_{x\in \partial\Omega} g(x)\right\}$, see e.g.\ \cite[Theorem 9.27 and footnote 34]{B11}. A slightly different proof is given in \cite{M}. 

\vspace{3mm}
\noindent
\textbf{[2. Lower bound for $-\sqrt{a} \log u_a(x)$]\hspace{2mm}}  The proof follows the lines of Varadhan's \cite{V67}. Since $f=0$ on $\AA$, the function $u_a$ restricted to $\AA$ satisfies  
\[
-a\Delta u_a+u_a = 0 \quad \text{in} \quad \AA 
\] 
and so $u_a|_{\overline{\AA}} \in C^2 (\AA) \cap C(\overline{\AA})$. 
We fix $y \in \AA$, set $\ep:= d(y,\partial \AA)$ and consider a unique solution $w$ in $C^2 (B_\ep(y)) \cap C(\overline{B_\ep(y)})$ to the equation
\begin{equation}
   \begin{cases}
-a\Delta w+ w = 0 &\text{in} \quad B_\ep(y),\\
w=M&\text{on} \quad \partial B_\ep(y). 
\end{cases}
\end{equation}
Since $u_a \le w$ on $\partial B_\ep(y)$, the comparison principle yields $u_a \le w$ in $B_\ep(y)$.  The function $w$ has the explicit form
\begin{equation}\label{eq:w}
w(x)=
\begin{cases}
\displaystyle \frac{M\int_0^\pi \cosh(|x-y|\cos \theta/\sqrt{a})\sin^{N-2}\theta\, \d \theta}{\int_0^\pi \cosh(\ep\cos \theta/\sqrt{a})\sin^{N-2}\theta\, \d \theta}
&\text{if } N\ge 2, 
\vspace{3mm}\\
\displaystyle \frac{ M\cosh(|x-y|/\sqrt{a})}{ \cosh(\ep/\sqrt{a})}
&\text{if } N=1. 
\end{cases}
\end{equation}
 Some straightforward estimates of $w$ entail 
\begin{align} 
-\sqrt{a}\log u_a(y)-d(y,\partial \AA)
\ge 
c \sqrt{a} \log \frac1{a}, \quad y\in \AA,~ 0 < a <\frac1{2},     \label{eq:lower}
\end{align}
for some constant $c>0$ independent of $a$ and depending on the dimension $N$, the global bound $M$ and the diameter of $\AA$. 

\vspace{3mm}
\noindent
\textbf{[3. Upper bound for $-\sqrt{a} \log u_a(x)$]\hspace{2mm}}  Let 
\[
\beta_a :=\inf_{y\in\partial \AA}u_a(y).   
\]
Since $u_a$ is continuous function on $\overline{\AA}$ and is positive, the above infimum is attained at a point in $\partial \AA$ and $\beta_a$ is positive. 

By contrast to the lower bound, we now take a ball outside of $\AA$ and construct a comparison function. Let $y \in \AA$ be fixed and $\sigma \in \partial \AA$ be a point such that $|y-\sigma| =d(y, \partial \AA)$. For each $0<a\ll 1$, there exists a ball $B_{\sqrt{a}}(\tilde{y}) \subseteq \Omega\setminus \AA$ tangent to $\partial \AA$ at $\sigma$. Let  $z$ be a unique classical solution to 
\begin{equation*}%\label{eq:out}
\begin{cases}
a\Delta z = z &\hspace{-15mm} \quad \text{in} \quad \R^N\setminus B_{\sqrt{a}}(\tilde{y}),\\
z=\beta_a&\hspace{-15mm} \quad \text{on}\quad \partial B_{\sqrt{a}}(\tilde{y}),\\
\displaystyle \lim_{|x|\to +{\infty}}z(x)=0. &
\end{cases}
\end{equation*} By the maximum principle, $z \le \beta_a$ on $\R^N\setminus B_{\sqrt{a}}(\tilde{y})$ and hence on $\AA$. 
Since now $u_a \ge \beta_a \ge z$ on $\partial \AA$, we get $u_a \ge z$ on $\AA$. Again, $z$ has an explicit form, which is omitted here.  
Estimating $z(y)$ we arrive at 
\begin{align} \label{eq:upper}
&-\sqrt{a}\log u_a(y)-d(y,\partial \AA)
\le    \sqrt{a}\log \frac1{\beta_a }  
+c' \sqrt{a}\log\frac1{a}  
\end{align}
for all $y \in \AA$ and $0<a <\frac1{2}$, where $c'>0$ is a constant depending only on the dimension $N$ and the diameter of $\AA$. 

The above arguments more or less followed the lines of \cite{V67} too. In the case of $q_a$, the boundary value was $1$ and so there was no need to study $\beta_a$.  In our case, however, we need to estimate $\beta_a$, which requires substantially new ideas. The next steps 4--7 are devoted to $\beta_a$. 

\vspace{3mm}
\noindent
\textbf{[4. Upper bound for $\sqrt{a} \log (1/\beta_a)$]\hspace{2mm}}
For each $a>0$, we choose a point $y_a \in \partial \AA$ such that $u_a(y_a)=\beta_a$, which exists by the continuity of $u_a$. 
We prove that, for any $0<\ep<d(\partial \AA, \partial \Omega):= \inf\{|x-y|:(x,y)\in\partial \AA \times \partial \Omega\}$ and $a>0$,  
\begin{align} 
\sqrt{a}\log \frac1{\beta_a}
\le
\ep-
\sqrt{a}\log \vmean_{\ep}^{y_a}({u_a}).  \label{eq:logC_a}
\end{align}

Again we begin with constructing a comparison function. For each  $0<\eta<  d(\partial \AA, \partial \Omega)$, let $u_a^\eta$ be a unique classical solution to
\begin{equation*}
\begin{cases}
a\Delta u_a^\eta = u_a^\eta & \quad \text{in}\quad B_\eta(y_a),\\
u_a^\eta={u}_a& \quad \text{on}\quad \partial B_\eta(y_a).
\end{cases}
\end{equation*}
Now the boundary values are not constant and so an explicit form of $u_a^\eta$ is hopeless. Still, the value of $u_a^\eta$ at the central point $y_a$ is explicit as shown in  \cite{Kuz}. Actually, the formula for $u_a^\eta(y_a)$ is  exactly formula \eqref{eq:w} evaluated at $x=y=y_a$ by replacing $M$ with the sphere mean of  $u_a$ over $\partial B_\eta(y_a)$, denoted by $\smean_\eta^{y_a}({u}_a)$. After some estimates, we can obtain 
\begin{align}\label{eq:uaeta}
u_a^\eta(y_a) \ge \frac{\smean_\eta^{y_a}({u}_a)}{ \cosh(\eta/\sqrt{a})}.   
\end{align}   
As in step 1 above, applying \cite[Theorem 9.27]{B11} to the function $u_a - u_a^\eta\in H_0^1(B_\eta(y))$ yields $u_a \ge u_a^\eta$ in $B_\eta(y)$. Combining \eqref{eq:uaeta} with $u_a(y_a)\ge u_a^\eta(y_a)$, applying $\int_0^\ep \cdot\, \eta^{N-1}\,\d \eta$, and recalling that the volume mean and sphere mean are related via
\begin{equation*}%\label{eq:polar}
    \vmean_\ep^y(h) 
    = \frac{N}{\ep^N} \int_0^\ep     \smean_\eta^y(h)\eta^{N-1}\, \d \eta,    
\end{equation*}
we obtain \eqref{eq:logC_a}.

\vspace{3mm}
\noindent
\textbf{[5. Lower bound for $\vmean_{\ep}^{y_a}({u_a})$]\hspace{2mm}}  This part is proven via the weak form of $u_a$. The idea is that  if we could take the test function $\varphi$ in \eqref{eq:sd_weak} to be the characteristic function $\chi_{B_\ep(y_a)}$, then the second term would yield the volume mean of $u_a$. Of course the characteristic function is not in $H_0^1(\Omega)$, but approximation via mollifiers works. We can neglect the first term of \eqref{eq:sd_weak} thanks to the small coefficient $a$. Consequently, we obtain 
\begin{equation}\label{eq:f}
\sup_{y\in \partial \AA} |\vmean_\ep^y(u_a) - \vmean_\ep^y(f)| \to 0 \quad \text{as}\quad a \to0^+. 
\end{equation}
Since $\partial \AA$ is the boundary of the open set $[f>0]$,  the volume mean $\vmean_\ep^y(f)$ is positive for all $y \in \partial \AA$. By the continuity of $y\mapsto \vmean_\ep^y(f)$, it has a positive minimum on $\partial \AA$. Combined with \eqref{eq:f}, this gives a uniform lower bound $\vmean_\ep^y(u_a) \ge \alpha(\ep) >0$ for some $\alpha(\ep)>0$ independent of $a$. 

\vspace{3mm}
\noindent
\textbf{[6. Conclusion: uniform convergence]\hspace{2mm}}
Considering \eqref{eq:logC_a} and the lower bound $\vmean_{\ep}^{y_a}({u_a})\ge \alpha(\ep)$, we obtain 
\[
\limsup_{a\to0^+} \sqrt{a} \log \frac1{\beta_a} \le 0.
\] This fact combined with inequalities \eqref{eq:lower} and \eqref{eq:upper} finishes the proof of Theorem \ref{T:main}.

\vspace{3mm}
\noindent
\textbf{[7. Rate of convergence]\hspace{2mm}}  In view of \eqref{eq:lower} and \eqref{eq:upper}, a good upper bound for $\sqrt{a}\log(1/\beta_a)$ is desirable.  For this, we choose $\ep = a^{\frac1{2}-\tau}$ in \eqref{eq:logC_a} as a function of $a$ ($0<\tau<\frac1{2}$ is arbitrarily small but fixed)  and give a bound for $\vmean_\ep^{y_a}(u_a)$. This can be achieved by carefully estimating the convergence \eqref{eq:f}. In the estimation we use the Schwarz inequality where the norms $\|u_a\|_{L^2(B_\ep(y))}, \|\nabla u_a\|_{L^2(B_\ep(y))} ~(y\in \partial \AA)$ appear. The mere use of the uniform bound $|u_a(x)|\le M$ results in $\|u_a\|_{L^2(B_\ep(y))}^2 \le C\epsilon^N $. This bound is kind of optimal if $\zeta=0$, e.g., in case $f=\chi_{\Omega\setminus \overline{\AA}}$. However,  the function $u_a$ is close to $f$ near $\partial \AA$ (see \eqref{eq:f}) and therefore, in case $\zeta>0$, the values of $u_a$ are likely to be close to $0$ near $\partial \AA$, so that $\|u_a\|_{L^2(B_\ep(y))}^2 \le C\epsilon^N $ might be far from optimal. In fact, by an iterative argument of local energy estimates, we can prove
\[
\|u_a\|_{L^2(B_\ep(y))}^2 + a \|\nabla u_a\|_{L^2(B_\ep(y))}^2 \le C \ep^{N + 2\zeta}, \qquad 0<a<1, y \in \partial \AA  
\]
for some uniform constant $C$ independent of $y \in \partial \AA$ and $a$. With this estimate, the convergence \eqref{eq:f} can be finely estimated and then Theorem \ref{T:rate} follows.

\begin{acknowledgement}
The authors would like to thank the anonymous referees for their careful reading and constructive suggestions, which have improved the clarity and readability of the manuscript.
\end{acknowledgement}
\ethics{Competing Interests}{\newline
This work was supported by JSPS Grant-in-Aid for Transformative Research Areas (B) grant
no. 23H03800JSPS. \newline
% [Author A] has a received research grant from [Company W].\newline
% [Author B] has received a speaker honorarium from [Company X] and owns stock in [Company~Y].\newline 
% [Author C] is a member of [committee Z].\newline 
The authors have no conflicts of interest to declare that are relevant to the content of this chapter.}

\eject

\end{document}